\newtheorem{thm}[equation]{Theorem}
\newtheorem{cor}[equation]{Corollary}
\newtheorem{lem}[equation]{Lemma}
\newtheorem{prop}[equation]{Proposition}
\theoremstyle{definition}
\newtheorem{defn}[equation]{Definition}
\theoremstyle{remark}
\newtheorem{rem}[equation]{Remark}
\newtheorem{exm}[equation]{Example}
\newcommand{\der}{\mathbb{D}}
\newcommand{\Ar}{\operatorname{Ar}}
\newcommand{\s}{\textsf{s}}
\newcommand{\wk}{K^{W,\operatorname{ob}}}
\renewcommand{\S}{\mathcal{S}}
\newcommand{\DK}{K}
\newcommand{\WK}{K^W}
\newcommand{\Ob}{\operatorname{Ob}}
\newcommand{\C}{\mathcal{C}}
\def\op{\operatorname{op}}
\def\r{\rightarrow} 
\def\rr{\Rightarrow} 
\def\ob{\operatorname{Ob}}
\def\ho{\operatorname{Ho}}
\newcommand{\iso}{\operatorname{iso}}
\newcommand{\eq}{\operatorname{eq}}
\def\diag{\operatorname{diag}}
\def\cat{\mathcal{C}at}  
\def\Dia{\mathcal{D}ia}  
\def\dirf{\mathcal{D}ir_{f}}  
\newcommand{\id}[1]{\operatorname{id}_{#1}} 
\renewcommand{\partial}{d} 
\def\PreDer{\mathtt{PDer}}
\def\eqPreDer{\mathtt{PDer}_{\operatorname{eq}}}
\def\PREDER{\mathtt{\underline{PDer}}^{\operatorname{str}}}
\def\eqPREDER{\mathtt{\underline{PDer}}^{\operatorname{str}}_{\operatorname{eq}}}
\def\Der{\mathtt{Der}}
\def\eqDer{\mathtt{Der}_{\operatorname{eq}}}
\def\DER{\underline{\mathtt{Der}}^{\operatorname{str}}}
\def\eqDER{\underline{\mathtt{Der}}^{\operatorname{str}}_{\operatorname{eq}}}
\def\TOP{\underline{\mathtt{Top}}}
\def\top{\mathtt{Top}}
\def\SSET{\underline{\mathtt{SSet}}}
\def\SSet{\mathcal{SS}et}
\def\sPreDer{\mathtt{PDer}^{\operatorname{str}}}
\def\eqsPreDer{\mathtt{PDer}^{\operatorname{str}}_{\operatorname{eq}}}
\def\sDer{\mathtt{Der}^{\operatorname{str}}}
\def\eqsDer{\mathtt{Der}^{\operatorname{str}}_{\operatorname{eq}}}
\def\eqs1Der{\mathtt{Der}^{\operatorname{str}}_{\operatorname{eq},0}}
\def\muob{{\mu}^{\operatorname{ob}}}
\def\facto{\mathtt{App}}
\def\MOD{\mathscr{MOD}}
\def\HOM{\operatorname{HOM}}
\def\BigDer{\mathscr{DER}}
\def\CAT{\mathscr{CAT}}
\def\To{\longrightarrow}
\newcommand{\dia}{\operatorname{dia}}
\numberwithin{equation}{section}
\begin{document}

\title{$K$-theory of derivators revisited} %

\author{Fernando Muro}%
\address{Universidad de Sevilla,
Facultad de Matem\'aticas,
Departamento de \'Algebra,
Avda. Reina Mercedes s/n,
41012 Sevilla, Spain}
\email{fmuro@us.es}
\urladdr{http://personal.us.es/fmuro}

\author{George Raptis}%
\address{Fakult\"{a}t f\"{u}r Mathematik, 
Universit\"{a}t Regensburg, 
93040 Regensburg, Germany}
\email{georgios.raptis@mathematik.uni-regensburg.de}

\subjclass[2010]{19D99,55U35}
\keywords{$K$-theory, derivator}

\thanks{The first author was partially supported
by the Andalusian Ministry of Economy, Innovation and Science under the grant FQM-5713, by the Spanish Ministry of Education and
Science under the MEC-FEDER grant  MTM2010-15831, and by the Government of Catalonia under the grant SGR-119-2009. }

\begin{abstract}
We define a $K$-theory for pointed right derivators and show that it agrees
with Waldhausen $K$-theory in the case where the derivator arises from a
good Waldhausen category. This $K$-theory is not invariant under general equivalences
of derivators, but only under a stronger notion of equivalence that is defined by 
considering a simplicial enrichment of the category of derivators. We show that 
derivator $K$-theory, as originally defined, is the best approximation to Waldhausen 
$K$-theory by a functor that is invariant under equivalences of derivators.
\end{abstract}

\maketitle
\setcounter{tocdepth}{1}
\tableofcontents

\section{Introduction}

Recent developments in the theory of derivators have shown that the theory is both sufficiently rich to contend for an independent approach to homotopical algebra 
and its language is very useful in formulating precisely universal properties in homotopy theory. Since models for homotopical algebra typically give rise to derivators, 
the theory reflects a minimalist approach employing basically only purely ($2$-)categorical arguments, albeit technically quite complex at times, to address problems of 
abstract homotopy theory. 

Derivators codify structure lying somewhere between the model and its associated homotopy category, but fairly closer to the model than the homotopy category, 
and restructure the presentation of the homotopy theory defined by the model in a surprisingly efficient way. This intermediate structure involves the collection of all homotopy categories of diagrams 
of various shapes in the model together with the network of restriction functors between them and their adjoint homotopy Kan extensions. The theory of derivators is based 
on an abstract axiomatization of collections of such (homotopy) Kan extensions (and their adjoints) which does not involve any underlying model. On the one hand, it is 
often the case that questions about the model are really questions about the associated derivator and thus they can instructively be handled more abstractly at this level of generality.
On the other hand, for the theory to be successful, one is normally required to supply a large amount of data in order to compensate for the lack of an underlying homotopy 
theory and, consequently, working with these objects can be cumbersome. 

The main problem is to understand how close this passage from the model to its associated derivator 
actually is to being faithful. This paper is a contribution to this problem in connection with Waldhausen $K$-theory regarded as an invariant of homotopy theories.
The proven failure to reconstruct $K$-theory from the triangulated structure of the homotopy category in a way that it satisfies 
certain desirable properties \cite{ankttc} suggested to turn to the more highly structured world of derivators
for such a reconstruction. Indeed, it turned out that the structure of a derivator is rich enough to allow for a natural definition 
of $K$-theory. This was introduced by Maltsiniotis \cite{ktdt} who also conjectured 
that it satisfies an \emph{additivity} property, \emph{agreement} with Quillen $K$-theory and a \emph{localization} 
property. In previous work \cite{ankttd}, we
showed that \emph{agreement} fails for Waldhausen $K$-theory and moreover, that derivator $K$-theory cannot satisfy
both \emph{agreement}
with Quillen $K$-theory and the \emph{localization} property. On the other hand, Cisinski and Neeman \cite{adkt} showed
that \emph{additivity} for derivator $K$-theory holds for triangulated derivators.

The purpose of this paper is twofold. First we define a new $K$-theory of derivators, which we call Waldhausen
$K$-theory, and show that it agrees with Waldhausen $K$-theory for all well-behaved Waldhausen categories (Theorem \ref{agreement}). The proof
rests crucially on the homotopically flexible versions of the $\S_{\bullet}$-construction due to Blumberg-Mandell \cite{aKtaht}
and Cisinski \cite{ikted}. 

The price to be paid for such a strong version of agreement is that this new definition is \emph{provably} not invariant
under equivalences of derivators. In this connection, it is also worthwhile to recall that To\"en-Vezzosi \cite{ktsc} showed that Waldhausen $K$-theory cannot factor 
through the $2$-category of derivators. However, we consider here a simplicial enrichment of the category of derivators which enhances the $2$-categorical structure. 
This leads to a stronger and more refined notion of equivalence of derivators which basically encodes higher coherence and is closer to an equivalence of homotopy 
theories. We show that Waldhausen $K$-theory of derivators is invariant under this stronger notion of equivalence. We think that 
the simplicial enrichment of the category of derivators and the accompanying stronger notion of equivalence have independent interest and may prove useful also in other 
applications of the theory.

There is a natural comparison transformation from Waldhausen $K$-theory to derivator $K$-theory. The second main
result of the paper (Theorem \ref{initial}) says that this comparison transformation is homotopically initial among all natural transformations 
from Waldhausen $K$-theory to a functor which is invariant under equivalences of derivators. 

\

The paper is organized as follows. In Section 2, we review some background material from the ($2$-categorical) theory of derivators and fix some notational conventions. 
In Section 3, we discuss the simplicial enrichment of the category of derivators, the associated notion of strong equivalence, and the comparison with the $2$-categorical
viewpoint. 

Section 4 is concerned with the definition of Waldhausen $K$-theory for derivators and some of its general properties. We present two canonically homotopy equivalent models for 
Waldhausen $K$-theory both of which we are going to use in the paper. Then we show that the Waldhausen $K$-theory of derivators  is invariant under strong
equivalences of derivators and agrees with the Waldhausen $K$-theory of derivable strongly saturated Waldhausen categories. In Section 5, we recall the definition of 
derivator $K$-theory and discuss its dependence on the $2$-categorical theory of derivators. Then we recall the definition of the comparison map from Waldhausen $K$-theory to 
derivator $K$-theory and show that derivator $K$-theory is the best approximation to Waldhausen $K$-theory by a functor that is invariant under equivalences 
of derivators. 

There are several remaining open questions, regarding either the notion of strong equivalence or the $K$-theory of derivators, some of which are briefly mentioned in Section 6. 
The paper ends with two appendices on topics of related interest but which are, strictly speaking, independent of the rest of the paper. In Appendix A, we recall the results 
from the comparison between combinatorial model categories and the $2$-category of derivators due to Renaudin \cite{pcthqd} and discuss some slight improvements with 
an eye towards understanding the comparison with the simplicial category of derivators. Appendix B is concerned with the approximation theorem in $K$-theory, which in 
a version due to Cisinski \cite{ikted} shows that $K$-theory is invariant under derived equivalences, and a partial converse which shows that derived equivalences are 
detected by the homotopy type of the $\S_\bullet$-construction.

\section{Preliminaries on (pre)derivators}\numberwithin{equation}{subsection}

\subsection{Prederivators}
Let $\cat$ denote the $2$-category of small categories. We fix a $1$- and $2$-full sub-$2$-category of diagrams 
$\Dia\subset\cat$ which is closed under all required constructions appearing below (e.g.~~taking opposite categories 
or finite (co)products, passing to comma categories, etc.), see \cite{ktdt} for the precise list of axioms. We think 
of the collection of categories in $\Dia$ as possible shapes for indexing diagrams in other categories. 

The smallest option for $\Dia$ is $\mathcal{P}os_f$, spanned by the finite posets, and the largest option is, of course, $\cat$ itself. 
An intermediate option, which appears prominently in connection with $K$-theory, is the $2$-category of diagrams $\dirf$ spanned
by the finite direct categories. Recall that a \emph{finite direct category} is a small category whose nerve has finitely many non-degenerate 
simplices. This is equivalent to saying that the underlying graph spanned by the non-identity arrows of the category has no 
cycles. Every finite poset is a finite direct category.

A \emph{prederivator} (with domain $\Dia$) is\footnote{The reader should be warned about the slight variations of this definition
that appear in the literature. These pertain to the choice of domain and the different ways of forming the opposite of a $2$-category.}
a strict $2$-functor $\der\colon \Dia^{\op} \to\cat.$ More explicitly, there is a small category $\der(X)$ for every category $X$ in $\Dia$, 
for every functor  $f \colon X\r Y$ in $\Dia$ there is an \emph{inverse image} functor
$$f^{*}= \der(f) \colon\der(Y)\To \der(X),$$
for every natural transformation $\alpha\colon f\rr g$ in $\Dia$,
$$\xymatrix@C=40pt{X\ar@/^15pt/[r]^{f}_{}="a" \ar@/_15pt/[r]_{g}^{}="b"&Y,\ar@{=>}"a";"b"^{\alpha}}$$
there is a natural transformation $\alpha^{*}=\der(\alpha)\colon f^{*}\rr g^{*},$
$$\xymatrix@C=40pt{\der(X)\ar@{<-}@/^15pt/[r]^{f^{*}}_{}="a" \ar@{<-}@/_15pt/[r]_{g^{*}}^{}="b"&\der(Y),\ar@{<=}"b";"a"^{\alpha^{*}}}$$
and all these are required to satisfy the obvious strict $2$-functoriality properties.

A (\emph{$1$-})\emph{morphism of prederivators} $\phi\colon \der\r\der'$ is a pseudo-natural transformation of contravariant $2$-functors, i.e.~
for every $X$ in $\Dia$ there is a functor $$\phi(X)\colon\der(X)\To\der'(X),$$ and for every $f\colon X\r Y$ in $\Dia$
there is a natural isomorphism $\phi(f)$,
\begin{equation}\label{natiso}
\xymatrix{\der(Y)\ar[r]^{\phi(Y)}_-{\;}="a"\ar[d]_{f^{*}}^-{\;}="b"&\der'(Y)\ar[d]^{f^{*}}\\
\der(X)\ar[r]_{\phi(X)}&\der'(X)\ar@{=>}"a";"b"^{\phi(f)}}
\end{equation}
such that certain coherence laws are satisfied. The morphism $\phi$ is called \emph{strict} if $\phi(X)f^*=f^*\phi(Y)$ and 
$\phi(f)$ is the identity natural transformation for every $f$.

A \emph{$2$-morphism} $\tau\colon \phi \rr \phi'$ between $1$-morphisms of prederivators is a \emph{modification} of pseudo-natural transformations.
This is defined by a collection of suitably compatible natural transformations in $\cat$
$$\xymatrix@C=40pt{\der(X)\ar@/^15pt/[r]^{\phi(X)}_<(.3){}="a" \ar@/_15pt/[r]_{\phi'(X)}^<(.3){}="b"&\der'(X)\ar@{=>}"a";"b"^{\tau(X)}}$$
for every $X$ in $\Dia$ (see, e.g., \cite[7.5]{borceux1} for the precise definitions).

Let $\PreDer$ (resp.~$\sPreDer$) denote the resulting $2$-category of prederivators, morphisms (resp.~strict morphisms) and $2$-morphisms.
This is an example of a $2$-category formed by $2$-functors, pseudo-natural (or $2$-natural) transformations and modifications (see \cite[Propositions 7.3.3 and 7.5.4]{borceux1}).
The notion of \emph{equivalence} of prederivators is defined in the usual way in terms of the $2$-categorical structure of $\PreDer$. 
Equivalently, a morphism $\phi \colon \der \to \der'$ is an equivalence if and only if $\phi(X)\colon \der(X) \to \der'(X)$ is an equivalence of categories for every 
$X$ in $\Dia$. We also consider the $1$-full sub-$2$-categories $\eqPreDer$ and $\eqsPreDer$ of $\PreDer$ and $\sPreDer$ respectively, which have the same objects and $1$-morphisms 
but whose $2$-morphisms are the invertible modifications. These are categories enriched in groupoids.

\begin{rem}\label{yoneda}
A basic example of a prederivator is the representable prederivator defined by a small category $X$:
$$\cat(-,X)\colon\Dia^{\op}\To\cat.$$
This construction yields a $2$-categorical Yoneda functor
$$\cat\To \sPreDer,$$
which is $1$- and $2$-fully faithful when restricted to $\Dia$.  If we restrict to the $1$-full sub-$2$-category 
whose $2$-morphisms are the invertible natural transformations, we obtain a $2$-functor to $\eqsPreDer$.
\end{rem}

Let $e$ denote the final category with one object $e$ and one morphism $\id{e}$. Given a small category $X$, 
there is a canonical isomorphism of categories $i_{X,-}\colon X\cong\cat(e,X)$ defined as follows. 
An object $x\in \ob X$ defines a functor $i_{X,x}\colon e\r X$ with $i_{X,x}(e)=x$, 
and a morphism $g\colon x\r x'$ in $X$ induces a natural transformation 
$i_{X,g}\colon i_{X,x}\rr i_{X,x'}$ with $i_{X,g}(e)=g$.

Let $X$ be a category in $\Dia$. For every prederivator $\der$ there is a functor
\begin{equation}\label{diaxe}
\dia_{X,e}\colon\der(X)\To \cat(X,\der(e))
\end{equation}
which sends an object $F$ in $\der(X)$ to the functor $\dia_{X,e}(F)\colon X\r \der(e)$ defined by
\begin{align*}
\dia_{X,e}(F)(x)&=i_{X,x}^*F,&
\dia_{X,e}(F)(g\colon x\r x')&=i_{X,g}^*F,
\end{align*}
and a morphism $\varphi\colon F\r G$ in $\der(X)$ to $\dia_{X,e}(\varphi)\colon \dia_{X,e}(F)\rr \dia_{X,e}(G)$, the natural transformation given by
\begin{align*}
\dia_{X,e}(\varphi)(x)&=i_{X,x}^*\varphi.
\end{align*}
This suggests a useful analogy, namely, to regard $\der(e)$ as the underlying
category of $\der$ and the elements of $\der(X)$ as $X$-indexed diagrams in $\der$. We will often write $F_x$ for 
$i_{X,x}^{*}(F)$.

\begin{rem} \label{underlying-diagram}
The functors \eqref{diaxe} assemble to a morphism of prederivators
$$\dia_{-,e}\colon\der\To \cat(-,\der(e))$$
which is the unit of the $2$-adjoint pair
$$\xymatrix@C=60pt{\sPreDer\ar@<.5ex>[r]^-{\text{evaluation at }e}&
\cat.\ar@<.5ex>[l]^-{\text{$2$-Yoneda}}}$$
The counit is the natural isomorphism $\cat(e,X)\cong X$ described above.
\end{rem}

The product of $2$-categories is $2$-functorial, hence for any $Y$ in $\Dia$ and any prederivator 
$\der$, we obtain a new prederivator 
$$\der_Y: = \der(- \times Y) \colon \Dia^{\op} \To \cat.$$ 
The morphism $\dia_{X,e}$ for this new 
prederivator will be denoted by
\begin{equation}\label{diaxy}
\dia_{X,Y} \colon \der(X \times Y)\To\cat(X,\der(Y)).
\end{equation}
Here we use the obvious isomorphism $e\times Y\cong Y$ as an identification. This functor sends an object 
$F$ in $\der(X \times Y)$ to the functor $\dia_{X,Y}\colon X\r\der(Y)$
 defined by
\begin{align*}
\dia_{X,Y}(F)(x)&=(i_{X,x} \times Y)^*F,&
\dia_{X,Y}(F)(g\colon x\r x')&=(i_{X,g} \times Y)^*F,
\end{align*}
and a morphism $\varphi\colon F\r G$ in $\der(X \times Y)$ to $\dia_{X,Y}(\varphi)\colon \dia_{X,Y}(F)\rr \dia_{X,Y}(G)$,  
the natural transformation given by
\begin{align*}
\dia_{X,Y}(\varphi)(x)&=(i_{X,x} \times Y)^*\varphi.
\end{align*}
The functor $\dia_{X,Y}$ may be viewed as taking an $(X \times Y)$-indexed diagram to the underlying
$X$-diagram of $Y$-indexed diagrams in $\der$.

\subsection{Derivators}

A (\emph{right} or \emph{left, pointed, stable/triangulated}) \emph{derivator} is a prederivator that satisfies certain additional properties.
We only briefly review the definitions here. With the exception of Appendix \ref{appA}, we will mainly be concerned with the case of
pointed right derivators.

A \emph{right} \emph{derivator} is a prederivator $\der$ satisfying the following
properties:
\renewcommand{\theenumi}{(Der\arabic{enumi})}
\renewcommand{\labelenumi}{\theenumi}
\begin{enumerate}
\setcounter{enumi}{0}
\setlength{\itemsep}{.2cm}
\item For every pair of small categories $X$ and $Y$ in $\Dia$, the functor induced by the inclusions of the factors to the
coproduct $X\sqcup Y$,
$$\der(X\sqcup Y)\To \der(X)\times \der(Y),$$
is an equivalence of categories. Moreover, $\der(\varnothing)$ is the final category $e$.

\item For every small category $X$ in $\Dia$, the functor
$$(i_{X,x}^{*})_{x \in \ob{X}}  \colon \der(X) \To \prod_{x\in \ob{X}} \der(e)$$
reflects isomorphisms.

\item For every morphism $f\colon X\r Y$ in $\Dia$, the inverse image $f^{*}\colon\der(Y)\r \der(X)$ admits a left adjoint
$f_{!}\colon \der(X)\To \der(Y)$. 

\item Given $f\colon X\r Y$ in $\Dia$ and $y$ an object of $Y$, consider the following diagram in $\Dia$,
$$\xymatrix{f\downarrow y\ar[d]^-{\;}="b"_-{p_{f\downarrow y}}\ar[r]^-{j_{f,y}}_-{\;}="a"&X\ar[d]^-{f}\\
e\ar[r]_-{i_{Y,y}}&Y\ar@{=>}"a";"b"^-{\alpha_{f,y}}}$$
Here $f\downarrow y$ is the comma category whose objects $(x,f(x)\to y)$ are pairs given by an object $x$ in $X$ and a map 
$f(x)\to y$ in $Y$, $j_{f,y}$ is the functor $j_{f,y}(x,f(x)\r y)=x$, and $\alpha_{f,y}(x,f(x)\r y)= (f(x)\r y)$.
Then the diagram obtained by applying $\der$ satisfies the Beck--Chevalley condition, i.e.~the mate
natural transformation
$$c_{f,y}\colon (p_{f\downarrow y})_{!}j^{*}_{f,y}\Longrightarrow i^{*}_{Y,y}f_{!},$$
which is the adjoint of
$$\xymatrix{j^{*}_{f,y}\ar@{=>}[r]^-{\begin{array}{c}
\scriptstyle\text{unit of}\\[-5pt]
\scriptstyle f_!\dashv f^{*}
\end{array}}& j_{f,y}^*f^*f_{!}=(fj_{f,y})^*f_{!}\ar@{=>}[r]^-{\alpha_{f,y}^*f_{!}}& (i_{Y,y}p_{f\downarrow y})^*f_{!}=p_{f\downarrow y}^*i^{*}_{Y,y}f_{!},}$$
is a natural isomorphism. 
\end{enumerate} \

A \emph{left} 
\emph{derivator} $\der$ is a prederivator whose \emph{opposite prederivator} $\der^{\op}$, defined by $\der^{\op}(X)=\der(X^{\op})^{\op}$, is a right derivator. 
A prederivator which is both a left and a right derivator is simply called \emph{derivator}. There is yet another axiom that a prederivator may satisfy, 
\begin{enumerate}
\item[(Der5)] For every pair of small categories $X$ and $\mathcal{I}$ in $\Dia$ where $\mathcal{I}$ is a free finite category, the
canonical functor
$$\dia_{\mathcal{I},X} \colon \der(\mathcal{I} \times X)\To\cat(\mathcal{I},\der(X))$$
is full and essentially surjective.
\end{enumerate}
The inclusion of this axiom in the definition of derivator is somewhat controversial in the literature.  Heller \cite{hellerht} includes (Der5) as part of the
definition. Other authors, e.g.~see \cite{derivateurs(malt), ktdt,franke-ASS,derivators(groth)}, prefer either to
omit it and reserve it for an additional `strongness' property of a derivator, or to replace it with the seemingly weaker version in which
$\mathcal{I}=[1]$. The inclusion of (Der5) matters very little for our purposes here, but we choose to exclude it from the basic
definition.

We recall that a small category is called \emph{pointed} if it has a zero object. A functor between pointed categories
is called pointed if it preserves zero objects. A prederivator $\der$ is called \emph{pointed} if $\der(X)$ is a pointed
category and $f^*\colon \der(Y) \to \der(X)$ is a pointed functor for all $X$ and $f\colon X \to Y$ in $\Dia$. This definition follows
Groth \cite{derivators(groth)}, who showed that it is equivalent for derivators to the original definition, see e.g.~\cite{ktdt}.

We recall the definition of cocartesian squares for right derivators. Consider the `commutative square' category $\square= [1] \times [1]$.
A \emph{commutative square} in a prederivator $\der$ is an object $F$ of $\der(\square)$. There is a subcategory
$\ulcorner \subseteq \square$ 
which can be depicted as follows,
$$\begin{array}{c}
\xymatrix{
(0,0)\ar[r]\ar[d] \ar@{}[rd]|{\displaystyle \square}&(1,0)\ar[d]\\
(0,1)\ar[r]&(1,1)
}\end{array},\quad
\begin{array}{c}
\xymatrix{
(0,0)\ar[r]\ar[d] \ar@{}[rd]|{\displaystyle \ulcorner}&(1,0)\\
(0,1)&
}\end{array}
.$$
Denote the inclusion functor by
\begin{align*}
i_{\ulcorner}\colon \ulcorner&\To \square
.
\end{align*}
If $\der$ is a right derivator, a commutative square $F$ in $\der$ is called \emph{cocartesian} if the counit
$$(i_{\ulcorner})_{!}i_{\ulcorner}^{*}F\To F$$ is an isomorphism. If $\der$ is a left derivator,
a commutative square $F$ in $\der$ is called \emph{cartesian} if it is cocartesian in $\der^{\op}$.
A pointed derivator which satisfies (Der5) is called \emph{stable} (or \emph{triangulated}) if
cocartesian and cartesian squares coincide.

A crucial point to note regarding all of the above definitions is that only the notion of a prederivator constitutes
\emph{structure}, while the additional axioms assert \emph{properties}. We also emphasize that the property of being a right
(resp.~left, pointed, stable/triangulated) derivator is invariant under equivalences of prederivators. Moreover, if $\der$ is 
a pointed right derivator, then so is $\der_Y$ for every $Y$ in $\Dia$. 

A morphism of right derivators $\phi\colon \der \to \der'$ is called \emph{cocontinuous} if for every $f\colon X \to Y$ in $\Dia$, the
canonical natural transformation $$f_! \phi(X) \Longrightarrow \phi(Y) f_{!},$$
adjoint to
$$\xymatrix@C=40pt{\phi(X)\ar@{=>}[r]^-{\begin{array}{c}
\scriptstyle\text{unit of}\\[-5pt]
\scriptstyle f_!\dashv f^{*}
\end{array}}& \phi(X)f^*f_{!}\ar@{=>}[r]^-{\phi(f)^{-1}f_{!}}& f^*\phi(Y)f_{!},}$$
is an isomorphism\footnote{This condition
is comparable to \emph{right} exactness of a functor. This justifies the term \emph{right} derivator.}.
An easy application of (Der1) shows that the components of a cocontinuous morphism between pointed right derivators are
automatically pointed functors.

Let $\Der$ (resp.~$\sDer$) denote the $2$-full sub-$2$-category of $\PreDer$ (resp.~$\sPreDer$) given by the pointed right
derivators, cocontinuous (strict) morphisms and $2$-morphisms. Let $\eqDer$ and $\eqsDer$ denote the $1$-full sub-$2$-categories
of $\Der$ and $\sDer$ whose $2$-morphisms are the invertible modifications.

\subsection{Examples}\label{examples}
The examples of prederivators that we are interested in arise from categories with weak equivalences as follows. Let $(\mathcal{C}, \mathcal{W})$
be a pair consisting of a small category $\mathcal{C}$ together with a subcategory $\mathcal{W}$ which contains the isomorphisms. The morphisms of
$\mathcal{W}$ are called \emph{weak equivalences}. The  \emph{homotopy category} of $(\mathcal{C}, \mathcal{W})$  is the localization
$$\ho\mathcal C:=\mathcal C[\mathcal W^{-1}].$$

For every object $X$ in $\Dia$, the diagram category $\mathcal{C}^{X}$ together with the subcategory of
objectwise weak equivalences of functors is again a category with weak equivalences 
 $(\mathcal{C}^X, \mathcal{W}^{X})$. The
choice of objectwise weak equivalences is natural in $X$, so there is a prederivator
$\der(\mathcal{C}, \mathcal{W}) \colon \Dia^{\op} \to \cat$ given by the homotopy categories of all relevant diagram categories, i.e.~it
is defined on objects by
$$\der(\mathcal{C}, \mathcal{W})(X):=  \ho (\mathcal{C}^{X}) 
$$
and on $1$- an $2$-morphisms in the canonically induced way. 

A functor $F\colon \mathcal{C} \to \mathcal{C}'$ that preserves the
weak equivalences $F(\mathcal{W}) \subset \mathcal{W}'$ induces a (strict) morphism of prederivators $\der(F) \colon \der(\mathcal{C},
\mathcal{W}) \to \der(\mathcal{C}', \mathcal{W}')$. Such functors are called \emph{homotopical}. A homotopical functor $F\colon \mathcal{C} \to \mathcal{C}'$
is a \emph{derived equivalence} if it induces an equivalence of homotopy categories $\ho{F}\colon \ho{\mathcal{C}} \stackrel{\simeq}{\to} \ho{\mathcal{C}'}$. 
A natural transformation $\alpha\colon F \Rightarrow F'$ of homotopical functors $F,F'\colon (\C, \mathcal{W}) \to (\C', \mathcal{W}')$ defines a $2$-morphism $\der(\alpha)\colon \der(F) \Rightarrow \der(F')$ in $\sPreDer$. If the components of the natural
transformation $\alpha$ are given by weak equivalences, then $\der(\alpha)$ is in $\eqsPreDer$. 

We note
that if  $\mathcal{W}$ is the subcategory of isomorphisms, $\der(\C, \mathcal{W})=\cat(-,\mathcal C)$ is the representable
prederivator of Remark \ref{yoneda}. We will normally write $\der(\mathcal{C})$ when the choice of $\mathcal{W}$ is clear 
from the context.

For well-behaved categories with weak equivalences $(\mathcal{C}, \mathcal{W})$, the associated prederivator $\der(\mathcal{C})$ is
a (right or left, pointed, stable/triangulated) derivator. We refer the reader to Cisinski \cite{ciscd} for a systematic treatment of
the results in this direction. Here we will be particularly concerned with categories with weak equivalences that arise from Waldhausen
categories \cite{akts}. Following Cisinski \cite{ikted}, we say that a Waldhausen category
$(\mathcal{C}, co \mathcal{C}, w \mathcal{C})$ is \emph{derivable} if it satisfies the `$2$-out-of-3' axiom and every morphism in
$\mathcal{C}$ can be written as the composition of a cofibration followed by a weak equivalence. The following theorem is a
special case of results proved in \cite{ciscd,ikted}.

\begin{thm}[Cisinski] \label{cisinski} \
\begin{itemize}
\item[(a)] Let $(\mathcal{C}, co \mathcal{C}, w \mathcal{C})$ be a derivable Waldhausen category. Then the associated 
prederivator $\der(\mathcal{C}): \dirf^{\op} \to \cat$ is a pointed right derivator which also satisfies (Der5).
\item[(b)] An exact functor of derivable Waldhausen categories
$F\colon (\mathcal{C}, co \mathcal{C}, w \mathcal{C}) \to (\mathcal{C}', co \mathcal{C}', w \mathcal{C}')$ induces a cocontinuous
morphism $\der(F)\colon \der(\mathcal{C}) \to \der(\mathcal{C}')$ in $\sDer$.
\item[(c)] Moreover, the morphism $\der(F) \colon \der(\C) \to \der(\C')$ is an equivalence in $\sDer$ if and only if
$\ho{F} \colon \ho{\C} \to \ho{\C'}$ is an equivalence of categories.
\end{itemize}
\end{thm}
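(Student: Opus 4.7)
The plan is to verify each axiom in part (a) by exploiting the fact that every object in a finite direct category $X$ has a finite degree (the supremum of lengths of chains of non-identity arrows ending at it). This permits inductive constructions: at each stage one only has to adjoin a single object by a pushout along a cofibration, an operation that exists in any Waldhausen category. The derivability hypotheses --- the $2$-out-of-$3$ axiom and the factorization of every morphism as a cofibration followed by a weak equivalence --- are exactly what one needs in order for these strict pushout constructions to descend coherently to the homotopy category.

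For (a), axioms (Der1) and (Der2) are immediate from the definition of objectwise weak equivalences, since $\C^{X \sqcup Y} \cong \C^X \times \C^Y$ and the weak equivalences on the left correspond to pointwise pairs on the right. For (Der3) and (Der4), given $f \colon X \to Y$ in $\dirf$ and $y \in Y$, the comma category $f \downarrow y$ is again finite direct, so one can construct the homotopy colimit of $j_{f,y}^* F$ over $f \downarrow y$ by the inductive pushout procedure; this produces a candidate for $(f_! F)_y$. One then checks, using the gluing lemma for Waldhausen categories (pushouts of weak equivalences along cofibrations are weak equivalences), that this assembles to a functor $f_! \colon \der(\C)(X) \to \der(\C)(Y)$ left adjoint to $f^*$ and satisfying the pointwise formula, thereby giving (Der3) and (Der4) at once. (Der5) for a free finite category $\mathcal{I}$ follows because the absence of composition relations in $\mathcal{I}$ allows one to lift morphisms in $\ho(\C^X)$ to strict morphisms in $\C^X$ and assemble them into a diagram in $\C^{\mathcal{I} \times X}$ realizing any prescribed diagram up to isomorphism. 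The pointed hypothesis is inherited from $\C$ via constant diagrams.

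For (b), an exact functor $F$ preserves cofibrations, pushouts along cofibrations, weak equivalences, and the zero object; it therefore preserves the entire inductive construction of $f_!$ up to canonical isomorphism, which is exactly cocontinuity of $\der(F)$. Strictness is automatic since $\der(F)$ is defined on the nose by postcomposition. For (c), one direction is trivial: evaluating at the terminal category gives $\der(F)(e) = \ho F$. For the converse, assume $\ho F$ is an equivalence and argue by induction on the degree of the objects of $X$ that $\der(F)(X) \colon \ho(\C^X) \to \ho((\C')^X)$ is an equivalence; the inductive step attaches a new object through a pushout along a cofibration and, once again via the gluing lemma, propagates both full faithfulness and essential surjectivity of the comparison.

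The main technical obstacle is to establish rigorously that the degree-by-degree pushout construction computes a left adjoint to $f^*$ at the homotopy level, i.e.~the combined content of (Der3) and (Der4). This amounts to a homotopy invariance statement for iterated pushouts, and the subtlety lies in reconciling the up-to-homotopy universal property demanded by the adjunction with the manifestly strict universal property of the pushout in $\C$; this is precisely where the full strength of derivability is used. The careful execution of these steps, together with the bookkeeping needed to package the pointwise formula into a genuine Beck--Chevalley isomorphism, is the substantive content of Cisinski's work in \cite{ciscd, ikted}.
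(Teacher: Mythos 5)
The paper does not actually prove Theorem \ref{cisinski}: it is stated as a special case of results of Cisinski, and the ``proof'' is the citation to \cite{ciscd,ikted}. Your proposal is therefore not competing with an argument in the text; it is an outline of the strategy Cisinski himself follows (inductive construction of homotopy colimits over finite direct categories by attaching one object at a time via a pushout along a cofibration, with derivability supplying the factorizations and the gluing lemma supplying homotopy invariance), and you are candid that the substantive verification is deferred to those references. As a road map this is essentially correct, including the reduction of (c) to an induction on the degree of objects of $X$ once cocontinuity and the case $X=e$ are in hand.

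Two steps are stated too strongly and would fail as written. First, (Der2) is not ``immediate from the definition of objectwise weak equivalences'': it asserts that a morphism of $\ho(\C^X)$ whose components are isomorphisms in $\ho(\C)$ is itself an isomorphism, and since $\C$ is not assumed strongly saturated, a pointwise isomorphism in the homotopy category need not be represented by a pointwise weak equivalence. Proving this conservativity requires the explicit description of $\ho(\C^X)$ in terms of (Reedy-)cofibrant resolutions of diagrams, which is one of the main technical inputs of \cite{ciscd}. Second, for (Der5) one cannot ``lift morphisms in $\ho(\C^X)$ to strict morphisms in $\C^X$'': a morphism in the localization is only a zigzag. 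The freeness of $\mathcal{I}$ lets you choose such a zigzag for each generating arrow independently, but you must then rectify these zigzags---replacing objects by weakly equivalent ones using the factorization axiom---to produce an honest object of $\C^{\mathcal{I}\times X}$ whose underlying diagram is isomorphic to the prescribed one. With these caveats, your outline matches the standard (and, as far as the paper is concerned, the cited) approach.
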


A derivable Waldhausen category $(\mathcal{C}, co \mathcal{C}, w \mathcal{C})$ is called \emph{strongly saturated} if it satisfies the property that
a morphism in $\mathcal{C}$ is a weak equivalence if and only if it becomes an isomorphism in the homotopy category. A derivable Waldhausen category 
with functorial factorizations is strongly saturated if and only if the weak equivalences are closed under retracts (see \cite[Theorems 5.5 and 6.4]{aKtaht}). 

\section{Simplicial enrichments of (pre)derivators}\numberwithin{equation}{subsection}

We recall that the \emph{simplex category} $\Delta$ consists of the finite ordinals $[n]=\{0<\cdots<n\}$, $n\geq 0$, and 
the non-decreasing maps between them. Thus, it is contained in $\dirf$, and in fact also in any other possible category of 
diagrams $\Dia$. The naturality of the construction $Y \mapsto \der_Y$ shows that there is a $2$-functor
$$\Dia^{\op} \times \sPreDer \To \sPreDer$$
which may be regarded as a `co-tensor $2$-structure' of $\sPreDer$ over $\Dia$. Using this, we can
associate to every prederivator $\der$ a simplicial object 
$\der_{\bullet}$ in $\sPreDer$ with 
$$\der_n=\der([n]\times -).$$ 
In particular, we have $\der_0= \der$. Faces and degeneracies are morphisms of prederivators since both $\der$ and the product 
of $2$-categories are $2$-functorial. This natural simplicial object will be used to define an enrichment of the underlying category 
of $\sPreDer$ over simplicial sets.

\subsection{Definition of $\PREDER$} \label{3.1} We define a simplicially enriched category $\PREDER$ with prederivators 
as objects and morphism simplicial sets 
$$\PREDER(\der, \der')_{\bullet} = \ob{\sPreDer(\der, \der'_{\bullet})}.$$
The composition is defined by simplicial maps
$$\PREDER(\der', \der'')_{\bullet} \times \PREDER(\der, \der')_{\bullet} \To \PREDER(\der, \der'')_{\bullet}$$
which send pairs of strict morphisms  $\phi\colon \der \to \der'([n]\times-)$ and $\psi\colon \der' \to \der''([n]\times-)$ to the composite
$$\xymatrix{\der \ar[r]^-{\phi}& \der'([n]\times-) \ar[rr]^-{\psi([n]\times-)}&& \der''([n] \times [n]\times -) \ar[rr]^-{\der''(\triangle\times-)}&& \der''([n]\times-),}$$
where $\triangle\colon [n] \to [n] \times [n]$ is the diagonal functor.

To see that the composition is associative, consider strict morphisms as follows
$$\phi\colon \der \To \der'([n]\times-)$$ 
$$\psi\colon \der' \To \der''([n]\times-)$$
$$\xi\colon \der'' \To \der'''([n]\times-).$$
Then it suffices to show that the leftmost and rightmost morphisms in the following diagram coincide
$$\xymatrix@!C=40pt@R=40pt{
&\der\ar[d]_-\phi\ar@(rd,u)[dddr]^-{\psi\phi}\ar@/^100pt/[ddddd]^-{\xi(\psi\phi)}
\ar@/_200pt/[ddddd]_-{(\xi\psi)\phi}&\\
&\der'([n]\times-)\ar[d]_{\psi([n]\times-)}\ar@/_140pt/[ddd]_-{(\xi\psi)([n]\times-)}&\\
&\der''([n]\times[n]\times-)\ar[rd]|-{\der''(\triangle\times-)}\ar[ld]|-{\xi([n]\times[n]\times-)}&\\
\der'''([n]\times[n]\times[n]\times-)\ar[rd]|-{\der'''(\triangle\times[n]\times-)}&&\der''([n]\times-)\ar[ld]|{\xi([n]\times-)}\\
&\der'''([n]\times[n]\times-)\ar[d]_-{\der'''(\triangle\times-)}&\\
&\der'''([n]\times-)&
}$$
All cells in this diagram commute by definition, except for the inner square. If the inner square were commutative, 
the result would follow immediately. However, the post-composition of the square with $\der'''(\triangle\times-)$ yields a
commutative square, and this suffices. Indeed, since the diagonal functor is coassociative, 
$(\triangle\times [n])\triangle=([n]\times\triangle)\triangle$, it is enough to show that the slightly different square
$$\xymatrix@!C=40pt@R=40pt{
&\der''([n]\times[n]\times-)\ar[rd]|-{\der''(\triangle\times-)}\ar[ld]|-{\xi([n]\times[n]\times-)}&\\
\der'''([n]\times[n]\times[n]\times-)\ar[rd]|-{\der'''([n]\times\triangle\times -)}&&\der''([n]\times-)\ar[ld]|{\xi([n]\times-)}\\
&\der'''([n]\times[n]\times-)&
}$$
commutes. Note that the only difference between this last square and the inner square in the previous diagram is in the arrow 
$\searrow$. This last square commutes because $\xi$ is a strict morphism.

This simplicial enrichment can be used to introduce homotopy theoretic notions in the world prederivators but these will be too
coarse for our purposes here. For a more appropriate notion of homotopy, we consider the subobject of $\der_{\bullet}$ defined by ``simplicially constant''
diagrams. 

\subsection{Definition of $\eqPREDER$} Given a prederivator $\der$ and $Y$ in $\Dia$, there is a prederivator 
$\der(Y \times-)_{\eq}$ equipped with a strict morphism $i_{\eq}\colon \der(Y \times -)_{\eq}\r\der(Y \times -)$ 
such that for all $X$ in $\Dia$, 
$$i_{\eq}(Y)\colon \der(Y \times X)_{\eq}\To\der(Y \times X)$$ 
is the inclusion of the full subcategory spanned by the objects $F$ such that the underlying $Y$-diagram
$$\dia_{Y,X}(F)\colon Y \longrightarrow \der(X)$$ 
sends each morphism of $Y$ to an isomorphism in $\der(X)$.

To show that this is well-defined, it is enough to check that given $f\colon X \r Z$ in $\Dia$ and $F$ in 
$\der(Y \times Z)_{\eq}$, the object $(Y \times f)^{*}(F)$ in $\der(Y \times X)$ is actually in 
$\der(Y \times X)_{\eq}$. Let $g \colon y \r y'$ be a morphism in $Y$. We have
\begin{align*}
(i_{Y,g} \times X)^{*}(Y \times f)^{*}(F)&= ((Y \times f)(i_{Y,g} \times X))^{*}(F)\\
&=(i_{Y,g} \times f)^{*}(F)\\
&=((i_{Y,g} \times Z)(Y \times f))^{*}(F)\\
&=(e \times f)^{*}(i_{Y,g} \times Z)^{*}(F).
\end{align*}
Since $F$ is in $\der(Y\times Z)_{\eq}$,  $(i_{Y,g}\times Z)^{*}(F)$ is an isomorphism, hence so is 
$(i_{Y,g}\times X)^{*}(Y \times f)^{*}(F)$ for any morphism $g$ in $Y$, therefore $(Y \times f)^{*}(F)$  
is in $\der(Y\times X)_{\eq}$. (See also Remark \ref{underlying-diagram}.) 

\
\

Hence, for any prederivator $\der$, there is a 
simplicial prederivator $\der_{\eq,\bullet}$ with 
$\der_{\eq,n}=\der([n]\times-)_{\eq}$ equipped with a 
morphism $i_{\eq}\colon \der_{\eq,\bullet}\r\der_{\bullet}$ of simplicial prederivators. Note that $\der_{\eq,0}= \der$.

We define a simplicially enriched category $\eqPREDER$ with prederivators as objects and morphism simplicial sets 
$$\eqPREDER(\der, \der')_{\bullet} = \ob{\sPreDer(\der, \der'_{\eq,\bullet})}$$
such that the morphisms of simplicial prederivators $i_{\eq}\colon \der'_{\eq,\bullet}\r\der'_{\bullet}$
induce a simplicial functor $i_{\eq}\colon\eqPREDER\r\PREDER$.

To show that this is a well-defined simplicial subcategory, we check that the composition in $\PREDER$ of two composable $n$-simplices in $\eqPREDER$ is again in $\eqPREDER$, 
i.e.~given strict morphisms  $\phi\colon \der \to \der'([n]\times-)_{\eq}$ and $\psi\colon \der' \to \der''([n]\times-)_{\eq}$, 
we show that the composite
$$\xymatrix{\der \ar[r]^-{\phi}& \der'([n]\times-) \ar[rr]^-{\psi([n]\times-)}&& \der''([n] \times [n]\times -) \ar[rr]^-{(\triangle\times-)^{*}}&& \der''([n]\times-)}$$
takes values in $\der''([n]\times-)_{\eq}$. Given an object $F$ in $\der (X)$ and a morphism $g\colon x\r x'$ in $[n]$, 
consider the following diagram
$$\xymatrix@C=20pt@R=40pt{\der(X) \ar[r]^-{\phi(X)}& \der'([n]\times X) \ar[rr]^-{\psi([n]\times X)}
\ar@/^23pt/[d]^<(.55){(i_{[n],x'}\times X)^{*}}_<(.6){}="b"
\ar@/_23pt/[d]_{(i_{[n],x}\times X)^{*}}^<(.6){}="a"
&& \der''([n] \times [n]\times X) \ar[rr]^-{(\triangle\times X)^{*}}
\ar@/^30pt/[d]^<(.25){([n]\times i_{[n],x'}\times X)^{*}}_<(.6){}="d"
\ar@/_30pt/[d]_<(.25){([n]\times i_{[n],x}\times X)^{*}}^<(.6){}="c"
&& \der''([n]\times X)\ar@/^22pt/[dd]|<(.7){(i_{[n],x'}\times X)^{*}}_<(.5){}="h"
\ar@/_22pt/[dd]|<(.2){(i_{[n],x}\times X)^{*}}^<(.5){}="g"
\\
&\der'(X) \ar[rr]_-{\psi( X)}&&\der''([n]\times X)
\ar@/^23pt/[d]^{(i_{[n],x'}\times X)^{*}}_<(.6){}="f"
\ar@/_23pt/[d]_{(i_{[n],x}\times X)^{*}}^<(.6){}="e"
&\\
&&&\der''(X)\ar@{=}[rr]&&\der''(X)
\ar@{=>}"a";"b"^-{(i_{[n],g}\times X)^{*}}
\ar@{=>}"c";"d"^-{([n]\times i_{[n],g}\times X)^{*}}
\ar@{=>}"e";"f"^-{(i_{[n],g}\times X)^{*}}
\ar@{=>}"g";"h"^-{(i_{[n],g}\times X)^{*}}
}$$
This diagram satisfies several commutativity properties. The subdiagram of functors formed by the straight arrows and the arrows which are curved to the left (resp.~right) is commutative. In the middle square, the natural transformations $\psi(X)(i_{[n],g}\times X)^{*}=([n]\times i_{[n],g}\times X)^{*}\psi([n]\times X)$ coincide, since $\psi$ is a strict morphism. In the rightmost region, the two horizontally composable natural transformations compose to $(i_{[n],g}\times X)^{*}(\triangle\times X)^{*}$, since $\der''$ is a $2$-functor.

Since $\phi$ takes values in $\der'([n]\times-)_{\eq}$, we have that $(i_{[n],g}\times X)^{*}\phi(X)$ is a natural isomorphism. 
Moreover, $\psi$ takes values in $\der''([n]\times-)_{\eq}$, therefore $(i_{[n],g}\times X)^{*}\psi(X)$ is also a natural 
isomorphism. This, together with the aforementioned commutativity properties, shows that 
$(i_{[n],g}\times X)^{*}(\triangle\times X)^{*}\psi([n]\times X)\phi(X)(F)$ is an isomorphism.
\

\

The passage from $\PREDER$ to $\eqPREDER$ is reminiscent of the passage from the category of $\infty$-categories, regarded
as an $(\infty,2)$-category, to the associated $\infty$-category defined by restriction to the maximal $\infty$-groupoids
of the morphism $\infty$-categories. More on the viewpoint that regards well-behaved types of prederivators as models for 
homotopy theories will be discussed in Appendix \ref{appA}, see also \cite{pcthqd}.

\subsection{Strong equivalences} The prederivator
$\der([1]\times-)_{\eq}$ together
with the following  factorization of the diagonal natural transformation
$$\der \stackrel{s _0}{\longrightarrow} \der([1]\times-)_{\eq} \xrightarrow{(\partial _1, \partial _0)} \der \times \der$$
will be regarded as a path object associated with $\der$. We can now introduce some basic homotopical notions in the context of 
prederivators.

\begin{defn} Let $\phi_0, \phi_1\colon \der \to \der'$ be two strict morphisms of prederivators. A \emph{strong isomorphism 
from} $\phi_0$ \emph{to} $\phi_1$ is a $1$-simplex of $\eqPREDER(\der, \der')$,
$$\Psi\colon \der \To \der'([1]\times-)_{\eq},$$ such that
$\partial_1(\Psi)= \phi_0$ and $\partial_0(\Psi)=\phi_1$. We say that $\phi_0$ is \emph{strongly isomorphic} to $\phi_1$,
written $\phi_0 {\simeq} \phi_1$, if there is a zigzag of strong isomorphisms from $\phi_0$ to $\phi_1$.
\end{defn}

Obviously the relation \ $\simeq$ \ is exactly the relation that two vertices of $\eqPREDER(\der, \der')$ lie on the same
component.

\begin{defn}
Let $\der$ and $\der'$ be prederivators.
\begin{itemize}
 \item[(a)] A strict morphism $\phi\colon \der \to \der'$ is called a \emph{strong} (or \emph{coherent}) \emph{equivalence} if there is a strict
 morphism $\psi\colon \der' \to \der$ such that $ \id{_{\der}} \simeq \psi \phi$ and
$\phi \psi \simeq \id{_{\der'}}$.
\item[(b)] $\der$ and $\der'$ are called \emph{strongly} (or \emph{coherently}) \emph{equivalent} if there is a strong equivalence
$\phi\colon \der \to \der'$.
\end{itemize}
\end{defn}

\begin{rem}
A strong isomorphism $\Phi$ from $\phi_0$ to $\phi_1$ induces a natural isomorphism $$\dia_{[1],-}(0\r 1)\colon\phi_0 \Longrightarrow \phi_1.$$
From this follows that strong equivalences of prederivators are also equivalences in the $2$-categorical sense of the previous section.
\end{rem}

\begin{exm}\label{path-objects}
For every prederivator $\der$ and any $X$ in $\Dia$ with an initial object $x_{0}\in\Ob X$, the prederivator $\der(X\times-)_{\eq}$ is strongly equivalent to $\der$. Indeed, consider the morphisms 
\begin{align*}
(p\times -)^{*}\colon\der &\To\der(X\times-)_{\eq},&
(i_{X,x_{0}}\times-)^{*}\colon\der(X\times-)_{\eq} &\To\der,&
\end{align*}
where $p\colon X\r e$ is the unique functor in this direction. Note that the underlying $X$-diagrams of elements in the image of 
$(p \times -)^{*}$ are constant functors (cf.~Remark \ref{underlying-diagram}). Since $pi_{X,x_{0}}$ is the identity functor on $e$, we have 
$(i_{X,x_{0}}\times-)^{*}(p\times -)^{*}=\id{\der}$. Moreover, $i_{X,x_{0}}p\colon X\r X$ is the constant functor 
$x\mapsto x_{0}$ and since $x_{0}$ is initial, there is a unique functor $H\colon [1]\times X\r X$ with $H(0,-)=i_{X,x_{0}}p$ 
and $H(1,-)=\id{X}$. The induced functor
$$(H\times-)^{*}\colon\der(X\times-)_{\eq}\To\der(([1]\times X)\times-)_{\eq}=\der(X\times-)_{\eq,1}$$
is a strong isomorphism  from $(p\times -)^{*}(i_{X,x_{0}}\times-)^{*}$ to $\id{\der(X\times-)_{\eq}}$. One can argue similarly if $X$ has a 
final object. This shows, in particular, that the face and degeneracy operators in $\der_{\eq,\bullet}$ are strong equivalences. 
\end{exm}


The notion of strong equivalence is different from the standard notion of equivalence defined in terms of the
$2$-categorical structure of $\PreDer$. This observation will be a crucial in connection with the definitions of 
$K$-theory that follow in the next sections.

\begin{exm} \label{iso-objects}
Let $\der$ be a prederivator and $\iso_n \der$ denote the prederivator for which $(\iso_n \der) (X)$ is the full subcategory spanned
by the strings of $n$ composable isomorphisms in the diagram category of $\cat([n], \der(X))$. Then the canonical `inclusion of identities' 
morphism $\der \to \iso_n \der$ is clearly an equivalence of prederivators, but not a strong equivalence in general. This assertion is 
a consequence of the invariance properties of Waldhausen $K$-theory and will be justified in Remark \ref{coherent-eq-vs-eq} below. 
\end{exm}

\begin{rem} \label{homotopy-eq-coherent-eq}
In connection with the examples of section \ref{examples}, a natural transformation $\alpha\colon F\rr F'$ between 
homotopical functors $F,F'\colon (\C, \mathcal{W}) \to (\C', \mathcal{W}')$ induces 
a $1$-simplex in $\PREDER$
$$\alpha_{*}\colon \der(\mathcal{C}', \mathcal{W}') \longrightarrow \der(\mathcal{C}, \mathcal{W})([1]\times-)$$
which upgrades the $2$-morphism $\der(\alpha)\colon \der(F) \Rightarrow \der(F')$. If $\alpha$ takes values in 
$\mathcal W'$ then $\alpha_{*}$ is a $1$-simplex in $\eqPREDER$. This implies that for every homotopical functor 
$F \colon (\C, \mathcal{W}) \to (\C', \mathcal{W}')$ which admits a `homotopy inverse', i.e.~there is a homotopical functor 
$G \colon \C' \to \C$ such that the composites $FG$ and $GF$ can be connected to the respective identity functors via 
zigzags of natural weak equivalences, the associated morphism $\der(F)$ is a strong equivalence.
\end{rem}

A $2$-category $\mathcal C$ will be regarded as a simplicial category $N_{\bullet}\mathcal C$  via the nerve functor $N_{\bullet} \colon \mathcal{C}at \to \SSet$ 
from small categories to simplicial sets, which preserves products. We have a simplicial functor
$$\PREDER \To N_{\bullet} \sPreDer,$$
which is the identity on objects, and is given on morphisms by the
simplicial maps
\begin{equation}\label{truncation-derivators}
\PREDER(\der, \der') \To N_{\bullet} \sPreDer(\der, \der')
\end{equation}
defined using the functors $\dia_{[\bullet], -}$. These simplicial maps also restrict to  simplicial maps
\begin{equation*}
\eqPREDER(\der, \der') \To N_{\bullet} \eqsPreDer(\der, \der')
\end{equation*}
which assemble to a simplicial functor
$$\rho\colon\eqPREDER \To N_{\bullet} \eqsPreDer$$
given by the identity on objects.

Consider the following adjoint pairs
$$\xymatrix{\SSet\ar@<.5ex>[r]^-{\tau_{1}}&\mathcal{C}at\ar@<.5ex>[r]\ar@<.5ex>[l]^-{N_{\bullet}}&\mathcal{G}rd.\ar@<.5ex>[l]^-{\text{incl.}}}$$
Here $\mathcal{G}rd$ is the category of groupoids, $\mathcal{G}rd\r \mathcal{C}at$ is the inclusion, the lower arrows are 
the right adjoints, $\tau_{1}$ is the fundamental category functor, and the composite 
$\SSet\r \mathcal{G}rd$ is the fundamental groupoid functor, denoted by $\Pi_1$. All these functors preserve products, 
hence, for example, we can apply them to a simplicial category $\mathcal{S}$ to obtain a $2$-category $\tau_1\mathcal{S}$, 
or a category enriched in groupoids $\Pi_1\mathcal{S}$. In particular, the simplicial functors 
\eqref{truncation-derivators} and $\rho$ above also define $2$-functors
$$\tau_1\PREDER \To \sPreDer,\qquad
\Pi_1\eqPREDER \To \eqsPreDer,$$
by adjunction. These functors are not $2$-equivalences of $2$-categories. This means that the simplicial enrichment of the category 
of prederivators encodes more structure that the $2$-category of prederivators.

Similarly let $\DER$ and $\eqDER$ denote the corresponding simplicial subcategories of $\PREDER$ and $\eqPREDER$ respectively.
In both cases the objects are pointed right derivators, and for a pair of pointed right derivators $\der$ and $\der'$ we have:
$$\DER(\der, \der')_{\bullet} = \ob \sDer(\der, \der'_{\bullet})$$
$$\eqDER(\der, \der')_{\bullet} = \ob \sDer(\der, \der'_{\eq,\bullet}).$$
This is well-defined because if $X$ is in $\Dia$ and $\der$ is a right
(resp.~left, pointed, stable/triangulated) derivator, then so are $\der(X\times-)$ and $\der(X\times-)_{\eq}$.

Specializing the discussion above  to pointed right derivators, we define similarly a simplicial functor 
$$\rho\colon \eqDER \To N_{\bullet} \eqsDer.$$
Again, the associated $2$-functor
$$\Pi_1\eqDER \To \eqsDer$$
is not a $2$-equivalence.

\section{Waldhausen $K$-theory of derivators}\numberwithin{equation}{subsection}

In this section, we define the Waldhausen $K$-theory of a pointed right derivator and show that it agrees
with the Waldhausen $K$-theory of a strongly saturated derivable Waldhausen category.

\subsection{The $\S_{\bullet \bullet}$-construction.} First we recall the analogue of Waldhausen's $\S_{\bullet}$-construction 
in the setting of derivators due to Garkusha \cite{sdckt1, sdckt2}. Let $\der$ be a pointed right derivator. We denote by $\Ar[n]$ the category (finite poset) of arrows of the poset $[n]$. Let 
$\S_n \der$ denote the full subcategory of $\der(\Ar[n])$ spanned by objects $F$ that satisfy the following conditions:
\begin{enumerate}
\item[(i)] for every $0 \leq i \leq n$, the object $i_{\Ar[n],i\r i}^{*}F \in \ob{\der(e)}$ is a zero object,
\item[(ii)] for every $0 \leq i \leq j \leq k \leq n$, the restriction of $F$ along the inclusion of the following subcategory of $\Ar[n]$, isomorphic to $\square$,
\[
 \xymatrix{
(i \to j) \ar[r] \ar[d] & (i \to k) \ar[d] \\
(j \to j) \ar[r] & (j \to k)
}
\]
is a cocartesian object of $\der(\square)$.
\end{enumerate}

This defines a simplicial category $\S_{\bullet} \der$ where the simplicial operators are defined by the structure of $\der$ as
a prederivator. Since morphisms in $\sDer$ preserve cocartesian squares, it follows easily that the correspondence
$\der \mapsto \S_{\bullet} \der$ defines a functor from the underlying $1$-category $\eqs1Der$ of $\eqsDer$ (or $\sDer$), which can also be obtained by forgetting the simplices of positive dimension in $\eqDER$, to 
the ($1$-)category of simplicial categories. 

For the definition of Waldhausen $K$-theory, we need to consider a more refined version of this construction. Let 
$\S_{\bullet \bullet} \der$ be the bisimplicial set whose set of $(n,m)$-simplices $\S_{n,m} \der$
is the set of objects
$$F \in \ob \der([m]\times\Ar[n])_{\eq}$$
such that:
\begin{itemize}
\item[($\ast$)] for every $j\colon [0] \to [m]$ the object $(j\times\Ar[n])^*(F) \in \Ob \der(\Ar[n])$ is in $\S_n \der$.
\end{itemize}
Note that if this condition holds for \emph{some} $j \colon [0] \to [m]$ then it holds for all $j$. The bisimplicial operators are 
again defined using the structure of the underlying prederivator. Moreover, it is easy to see that the construction is natural in $\der$,
that is, we obtain a functor $\der \mapsto \S_{\bullet \bullet} \der$ from the underlying $1$-category $\eqs1Der$ 
of $\eqsDer$ to the category of bisimplicial sets.

\begin{defn}
The Waldhausen $K$-theory of a pointed right derivator $\der$ is defined to be the space
$\WK(\der) := \Omega | \S_{\bullet \bullet} \der|.$
\end{defn}

Our next goal is to show that the functor $\WK$ can be extended to a simplicial functor from $\eqDER$ to the 
simplicially enriched category of topological spaces $\TOP$. Here the $n$-simplices of the simplicial mapping 
space $\TOP(X,Y)$ between topological spaces $X$ and $Y$ are the continuous maps $X \times \Delta^n \to Y$ 
where $\Delta^n$ denotes the geometric $n$-simplex. Since both the geometric realization functor and the loop 
space functor are simplicial,
it is enough to show that the functor
$$\der\mapsto \diag\S_{\bullet \bullet}\der$$ 
can be extended to a simplicial functor from $\eqDER$ to the standard simplicially enriched category of simplicial sets $\SSET$. 
We recall that for simplicial sets $X$ and $Y$, the $n$-simplices of $\SSET(X, Y)$ 
are the simplicial maps $X \times \Delta[n] \to Y$ where $\Delta[n]$ denotes the $n$-simplex and $|\Delta[n]| \cong \Delta^n$. 
A useful way of describing an $n$-simplex of $\SSET(X,Y)$ is by giving a natural transformation as follows (cf.~\cite[1.4]{akts})
$$\xymatrix@C=30pt@R=8pt{&\Delta^{\op}\ar@/^8pt/[rd]^{X}\ar@{=>}[dd]_{\alpha}&
\\(\Delta\downarrow[n])^{\op}\ar@/^8pt/[ru]^{\text{source}}\ar@/_8pt/[rd]_{\text{source}}&&\operatorname{\mathtt{Set}},\\
&\Delta^{\op}\ar@/_8pt/[ru]_{Y}&}$$
Such a natural transformation $\alpha$ produces a simplicial map $\phi \colon X \times \Delta[n] \to Y$ which 
is defined by 
$$\phi(x, [k] \stackrel{\sigma}{\to} [n]) = \alpha(\sigma)(x).$$ 
Conversely, a simplicial map $\phi \colon X \times \Delta[n] \to Y$ defines the components of such a natural transformation 
by setting $\alpha([k] \stackrel{\sigma}{\to} [n]) = \phi(-, \sigma): X_k \to Y_k$. 

\begin{prop} \label{$K$-theory-as-a-simplicial-fun}
Waldhausen $K$-theory extends to a simplicial functor 
$$\WK \colon \eqDER \To \TOP.$$
\end{prop}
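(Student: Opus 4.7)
The strategy is to construct, for each pair $\der,\der'$ of pointed right derivators, a simplicial map
$$
\eqDER(\der,\der')_\bullet \To \SSET\bigl(\diag\S_{\bullet\bullet}\der,\diag\S_{\bullet\bullet}\der'\bigr)_\bullet
$$
and to verify compatibility with composition; post-composing with $|{-}|$ and $\Omega$ will then yield the extension to $\TOP$. An $n$-simplex of $\eqDER(\der,\der')$ is a strict cocontinuous morphism $\phi\colon \der \to \der'([n]\times-)_{\eq}$, and, by the natural-transformation description recalled at the end of the excerpt, an $n$-simplex of the target corresponds to an assignment $\sigma \mapsto \alpha(\sigma)\colon \S_{k,k}\der \to \S_{k,k}\der'$, natural in $\sigma \in (\Delta\downarrow[n])_k$. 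Given $\phi$, I would set
$$
\alpha_\phi(\sigma)(F) := (\triangle \times \Ar[k])^* (\sigma \times [k] \times \Ar[k])^* \phi([k]\times\Ar[k])(F) \in \der'([k]\times\Ar[k]),
$$
where $\triangle\colon [k] \to [k]\times[k]$ is the diagonal.

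Three things must then be verified. First, that $\alpha_\phi(\sigma)(F) \in \S_{k,k}\der'$: the equivalence condition holds because the underlying $[k]$-diagram of the output factors any morphism as the composite of an isomorphism from the ``$[n]$-direction'' of $\phi(F)$ (since $\phi$ lands in $\der'([n]\times-)_{\eq}$) and an isomorphism from the middle ``$[k]$-direction'' (inherited from $F \in \der([k]\times\Ar[k])_{\eq}$ together with the fact that the strict morphism $\phi$ preserves underlying diagrams). Condition $(\ast)$ reduces, by strictness of $\phi$, to the assertion that for each vertex $j \in [k]$ the composite $(\sigma j \times -)^*\phi\colon \der \to \der'$ is a strict cocontinuous morphism of pointed right derivators; applied to $(j\times\Ar[k])^*F \in \S_k\der$ this then lands in $\S_k\der'$ by the functoriality of $\S_\bullet$ on cocontinuous morphisms recalled in \S4.1. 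Second, naturality in $\sigma$: for $\tau\colon [j]\to[k]$, both $(\tau\times\Ar[\tau])^*\alpha_\phi(\sigma)(F)$ and $\alpha_\phi(\sigma\tau)\bigl((\tau\times\Ar[\tau])^*F\bigr)$ should unwind, using strictness of $\phi$, to the pullback of $\phi([k]\times\Ar[k])(F)$ along the common functor $(a,b)\mapsto (\sigma\tau(a),\tau(a),\Ar[\tau](b))$.

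Third, compatibility with composition: given composable $\phi$ and $\psi\colon \der' \to \der''([n]\times-)_{\eq}$, the $\PREDER$-composite $(\triangle\times-)^*\psi([n]\times-)\phi$ yields a natural transformation $\alpha_{\psi\circ\phi}$ which, by strictness of $\psi$, unwinds to the pullback of $\psi(\phi(F)) \in \der''([n]\times[n]\times[k]\times\Ar[k])$ along $(a,b)\mapsto (\sigma(a),\sigma(a),a,b)$; one then checks this matches the $\SSET$-composition of $\alpha_\phi(\sigma)$ with $\alpha_\psi(\sigma)$. The hard part will be the check that $\alpha_\phi(\sigma)(F)$ satisfies condition $(\ast)$: this requires identifying the vertex-restriction morphism $(\sigma j \times -)^*\phi$ as a cocontinuous morphism of pointed right derivators, which in turn rests on a Beck--Chevalley argument in $\der'$ ensuring that pushforwards preserve the $_{\eq}$-subprederivator and commute with vertex restriction. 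Once that is in place, the remaining naturality and composition verifications are routine diagrammatic bookkeeping of pullbacks along diagonal maps.
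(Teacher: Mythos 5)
Your construction is exactly the paper's: the paper defines $\phi_*(\sigma)$ as the (co)restriction of the composite $(\triangle\times\Ar[k])^*(\sigma\times[k]\times\Ar[k])^*\phi([k]\times\Ar[k])$ and then declares naturality in $\sigma$ and compatibility with composition to be straightforward (referring back to the argument of \S 3.1), so your proposal follows the same route. The extra details you supply — the factorization of morphisms of $[k]$ into an ``$[n]$-direction'' and a ``$[k]$-direction'' isomorphism for the $\eq$-condition, and the reduction of $(\ast)$ to cocontinuity of the vertex restrictions $(i_{[n],\sigma(j)}\times-)^*\phi$ — are correct fillings-in of steps the paper leaves implicit.
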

\begin{proof}
As remarked above, it suffices to show that the ($1$-)functor 
\begin{align*}
\eqs1Der &\To \mathtt{SSet},\\
\der&\; \mapsto \;\diag \S_{\bullet \bullet} \der,
\end{align*}
extends to a simplicial functor from $\eqDER$ to $\SSET$. This extension is defined as follows: given 
an $n$-simplex $\phi \colon \der \to \der'([n]\times-)_{\eq}$ in $\eqDER$, its image in $\SSET$
is an $n$-simplex which we specify by giving the associated natural transformation 
$$\xymatrix@C=30pt@R=8pt{&\Delta^{\op}\ar@/^8pt/[rd]^{\diag \S_{\bullet \bullet} \der}\ar@{=>}[dd]_{\phi_{*}}&
\\(\Delta\downarrow[n])^{\op}\ar@/^8pt/[ru]^{\text{source}}\ar@/_8pt/[rd]_{\text{source}}&&\operatorname{\mathtt{Set}},\\
&\Delta^{\op}\ar@/_8pt/[ru]_{\diag \S_{\bullet \bullet} \der'}&}$$
The component of $\phi_*$ at an object $\sigma \colon [k] \to [n]$ in $\Delta\downarrow[n]$ is 
the map $\phi_{*}(\sigma)\colon \S_{k,k}\der\r\S_{k,k}\der'$ defined as the (co)restriction of the map on objects that comes 
from the following functor:
$$\xymatrix{
 \der([k]\times\Ar[k])_{\eq}\ar[d]^{\phi([k]\times\Ar[k])}\\ \der'(([n]\times[k])\times\Ar[k])_{\eq}\ar[d]^{\der'(\sigma \times [k]\times\Ar[k])}\\ \der'(([k]\times[k])\times\Ar[k]))_{\eq}\ar[d]^{\der'(\triangle\times\Ar[k])}\\
  \der'([k]\times\Ar[k])_{\eq}}$$
It is straightforward to check that $\phi_{*}$ is a natural transformation. Moreover, it is easy to check that the 
correspondence $\phi \mapsto \phi_*$ respects the composition (by arguments analogous to those in \ref{3.1}).
\end{proof}

As an immediate consequence, we have the following invariance property of Waldhausen $K$-theory.

\begin{cor}
Let $\phi\colon \der \to \der'$ be a strong equivalence of pointed right derivators. Then the
induced map $\WK(\phi)\colon \WK(\der) \to \WK(\der')$ is a homotopy equivalence.
\end{cor}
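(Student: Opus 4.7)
The plan is to deduce the corollary directly from Proposition \ref{$K$-theory-as-a-simplicial-fun}, which extends $\WK$ to a simplicial functor $\WK\colon \eqDER \to \TOP$. The key principle is that any simplicial functor preserves the homotopy relation encoded by $1$-simplices in the mapping spaces, and therefore preserves the notion of homotopy equivalence internal to each simplicially enriched category.

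First, unpack the hypothesis. A strong equivalence $\phi\colon \der \to \der'$ provides a strict cocontinuous morphism $\psi\colon \der' \to \der$ together with zigzags of homotopies $\id{_{\der}} \simeq \psi\phi$ and $\phi\psi \simeq \id{_{\der'}}$. By definition, each such homotopy is a $1$-simplex in some mapping simplicial set $\eqDER(\der, \der)$ or $\eqDER(\der', \der')$, i.e.~a strict cocontinuous morphism into $\der([1]\times-)_{\eq}$ or $\der'([1]\times-)_{\eq}$ whose two faces are the morphisms to be compared.

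Next, apply the simplicial functor $\WK$. Since $\WK$ is simplicial, it sends each such $1$-simplex to a $1$-simplex in the mapping space of $\TOP$. Spelling this out: a $1$-simplex in $\TOP(\WK(\der), \WK(\der))$ is by definition a continuous map $\WK(\der) \times \Delta^{1} \to \WK(\der)$, that is, an ordinary topological homotopy whose endpoints are the images of the two faces. Consequently the zigzags of homotopies in $\eqDER$ are transported to zigzags of topological homotopies between $\WK(\psi)\WK(\phi) = \WK(\psi\phi)$ and $\id{_{\WK(\der)}} = \WK(\id{_{\der}})$, and symmetrically between $\WK(\phi)\WK(\psi)$ and $\id{_{\WK(\der')}}$; here we use that $\WK$, being a simplicial functor, is in particular a (set-enriched) functor on the underlying categories, hence preserves composition and identities.

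Concatenating these zigzags inside $\TOP$ yields that $\WK(\psi)$ is a two-sided homotopy inverse of $\WK(\phi)$, proving the corollary. The only possible point of friction is the identification of a $1$-simplex of the mapping space in $\TOP$ with a topological homotopy in the classical sense, but this is built into the definition of the simplicial enrichment of $\TOP$ recalled just above Proposition \ref{$K$-theory-as-a-simplicial-fun} (via $|\Delta[1]| \cong \Delta^{1}$). No additional properties of the $\S_{\bullet\bullet}$-construction are needed beyond the formal fact that it assembles into a simplicial functor.
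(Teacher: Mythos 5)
Your proof is correct and is exactly the argument the paper intends: the corollary is stated as an immediate consequence of Proposition \ref{$K$-theory-as-a-simplicial-fun}, and your write-up simply makes explicit why a simplicial functor transports $1$-simplices of $\eqDER(\der,\der')$ to $1$-simplices of $\TOP(\WK(\der),\WK(\der'))$, i.e.~to genuine topological homotopies, so that a strong equivalence goes to a homotopy equivalence. No discrepancy with the paper's route.
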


\subsection{The $\s_{\bullet}$-construction} We mention a variant of the $\S_{\bullet \bullet}$-construction which 
is actually the analogue of Waldhausen's $\s_\bullet$-construction in this context (cf.~\cite[1.4]{akts}).
Let $\s_\bullet \der$ denote the simplicial set with $n$-simplices
$$\s_n \der := \ob{\S_n \der} = \S_{n, 0} \der$$
and define
$$\wk(\der):= \Omega | \s_{\bullet} \der |.$$
The inclusion of the $0$-simplices defines a canonical comparison map $$\iota \colon \wk(\der) \To \WK (\der).$$

\begin{prop} \label{s-vs-S}
The comparison map $\iota$ is a weak equivalence.
\end{prop}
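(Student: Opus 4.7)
The strategy is to apply the bisimplicial realization lemma. I view $\s_\bullet\der = \S_{\bullet,0}\der$ as a bisimplicial set constant in the $m$-direction, so that $\iota$ arises (after looping) from the natural bisimplicial map $\s_\bullet\der \to \S_{\bullet\bullet}\der$ whose rows are the inclusions $\S_{\bullet,0}\der\hookrightarrow\S_{\bullet,m}\der$ coming from pullback along the unique functor $[m]\to[0]$. By the realization lemma, it will suffice to show that these row inclusions are weak equivalences of simplicial sets for every $m$.

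For the row analysis, I would first identify $\S_{\bullet,m}\der$ with $\s_\bullet\bigl(\der([m]\times-)_{\eq}\bigr)$: the subprederivator $\der([m]\times-)_{\eq}$ is itself a pointed right derivator (axioms (Der1)--(Der4) transfer via fiberwise detection using (Der1)--(Der2)), and the $\S_n$-condition (vanishing restrictions and cocartesian squares) is fiberwise and hence compatible with this sub-prederivator structure. Under this identification, the row inclusion at index $m$ is induced by the strict morphism $\der\to\der([m]\times-)_{\eq}$ pulling back along $[m]\to[0]$, which by Example \ref{path-objects} is a strong equivalence of prederivators.

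The hard part will be upgrading this strong equivalence of prederivators to a weak equivalence of the corresponding $|\s_\bullet(-)|$-realizations. Unlike $\WK$, which is a simplicial functor by the preceding proposition, the construction $\s_\bullet$ is only functorial at the $1$-categorical level on objects, so one cannot directly appeal to that formalism. Instead I would use the explicit homotopy $H\colon[1]\times[m]\to[m]$ from Example \ref{path-objects} with $H(0,-)=i_{[m],0}p$ and $H(1,-)=\id{[m]}$: pullback along $H$ gives a simplicial homotopy, inside $\s_\bullet\bigl(\der([m]\times-)_{\eq}\bigr)$, from the identity to the endomorphism $(p\times-)^*(i_{[m],0}\times-)^*$ that factors through $\s_\bullet\der$, while the opposite composition is already $\id{\s_\bullet\der}$. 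This exhibits the row inclusion as a simplicial deformation retract, hence a weak equivalence. Invoking the bisimplicial realization lemma and then $\Omega$ finishes the proof.
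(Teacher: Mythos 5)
Your argument is correct and is essentially the paper's own proof: the paper likewise identifies the $m$-th row of $\S_{\bullet\bullet}\der$ with $\s_{\bullet}\der([m]\times-)_{\eq}$ and deduces the claim from the realization lemma together with Example \ref{path-objects} and the invariance of $|\s_{\bullet}(-)|$ under strong equivalences (Corollary \ref{invariance s-constr}). Your explicit simplicial deformation retraction via $H$ is just an unpacking, in the case at hand, of Lemma \ref{s-construction} applied to the homotopy $(H\times-)^{*}$, so no new input is needed.
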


Proposition \ref{s-vs-S} is a consequence of the following lemma (cf.~\cite[Lemma 1.4.1]{akts}).

\begin{lem} \label{s-construction}
Let $\phi, \phi'\colon \der \to \der'$ be two cocontinuous strict morphisms of pointed right derivators. Then a $1$-simplex
$\Psi$  in $\eqDER$ with $d_{1}\Psi=\phi$ and $d_{0}\Psi=\phi'$ induces a simplicial homotopy 
$\s_{\bullet} \phi \simeq \s_{\bullet} \phi'\colon \s_{\bullet} \der \to \s_{\bullet} \der'$.
\end{lem}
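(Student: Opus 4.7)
The plan is to construct an explicit simplicial homotopy $H \colon \s_\bullet \der \times \Delta[1] \to \s_\bullet \der'$ directly out of the $1$-simplex $\Psi \colon \der \to \der'([1]\times -)_{\eq}$. For an $n$-simplex $(F, \sigma)$ with $F \in \s_n\der = \ob \S_n\der$ and $\sigma \colon [n] \to [1]$, associate to $\sigma$ the functor $\tau_\sigma \colon \Ar[n] \to [1]$ defined by $\tau_\sigma(i \to j) := \sigma(j)$, and set
\[
 H(F, \sigma) \;:=\; (\tau_\sigma, \id{\Ar[n]})^{\ast}\, \Psi(\Ar[n])(F) \;\in\; \der'(\Ar[n]).
\]
When $\sigma$ is constant $0$ (resp.~$1$), the composite $(\tau_\sigma, \id{\Ar[n]})$ factors as $i_{[1],0}\times\Ar[n]$ (resp.~$i_{[1],1}\times\Ar[n]$), so $H(F, 0) = \phi(\Ar[n])(F) = \s_n\phi(F)$ by $d_1\Psi = \phi$, and $H(F, 1) = \s_n\phi'(F)$ by $d_0\Psi = \phi'$. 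Simplicial naturality in $[n]$, i.e.~the identity $H(\Ar(f)^{\ast}F, \sigma \circ f) = \Ar(f)^{\ast}H(F, \sigma)$ for $f \colon [m] \to [n]$, reduces to the observation $\tau_{\sigma f} = \tau_\sigma \circ \Ar(f)$ combined with strictness of $\Psi$, which gives $\Psi(\Ar[m]) \circ \Ar(f)^{\ast} = ([1]\times\Ar(f))^{\ast} \circ \Psi(\Ar[n])$.

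The content of the argument is showing that $H(F, \sigma)$ actually lies in $\S_n \der'$. Condition (i), vanishing on the diagonal, follows because $\phi, \phi'$ are cocontinuous morphisms of pointed right derivators and hence pointed: the value of $\Psi(\Ar[n])(F)$ at $(\sigma(i), i \to i)$ is $\phi_{\sigma(i)}(e)(F_{i \to i}) = 0$, writing $\phi_0 := \phi$ and $\phi_1 := \phi'$. For condition (ii), given the standard inclusion $u \colon \square \hookrightarrow \Ar[n]$ over a triple $i \leq j \leq k$, compare with the ``constant $1$'' map $v \colon \square \to [1]\times\Ar[n]$, $v(a,b) := (1, u(a,b))$. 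There is a canonical natural transformation
\[
 (\tau_\sigma, \id{\Ar[n]}) \circ u \;\Longrightarrow\; v
\]
whose component at $(a, b)$ is the unique morphism from $(\tau_\sigma(u(a,b)), u(a,b))$ to $(1, u(a,b))$ in the poset $[1]\times\Ar[n]$ (naturality is automatic as the target is a poset). Applying $\Psi(\Ar[n])(F)$ yields a morphism $u^{\ast}H(F, \sigma) \to v^{\ast}\Psi(\Ar[n])(F)$ in $\der'(\square)$, and its image under each $i_{\square, (a,b)}^{\ast}$ in $\der'(e)$ is either the identity (when $\tau_\sigma(u(a,b)) = 1$) or the component at $u(a,b) \in \Ar[n]$ of the natural isomorphism $\phi(\Ar[n])(F) \xrightarrow{\sim} \phi'(\Ar[n])(F)$ given by the $(-)_{\eq}$-condition on $\Psi(\Ar[n])(F)$ (when $\tau_\sigma(u(a,b)) = 0$); either way an isomorphism. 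By axiom (Der2) the comparison is itself an isomorphism in $\der'(\square)$. Since $v^{\ast}\Psi(\Ar[n])(F) = \phi'(\square)(u^{\ast}F)$ by strictness, and this is cocartesian because $\phi'$ is cocontinuous and $u^{\ast}F$ is cocartesian, so is $u^{\ast}H(F, \sigma)$.

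The main obstacle is precisely this last compatibility check, and it is exactly where the $(-)_{\eq}$-hypothesis on $\Psi$ is indispensable: without it the components of the comparison transformation would merely be morphisms rather than isomorphisms, and cocartesianness of the pullback squares could not be transported back to $u^{\ast}H(F, \sigma)$. Everything else---the simplicial identities, the boundary identifications, and the packaging of the $H(F, \sigma)$ into a simplicial map $\s_\bullet\der \times \Delta[1] \to \s_\bullet\der'$---is routine inspection of the definitions.
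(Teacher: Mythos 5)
Your construction is the same as the paper's: your functor $(\tau_\sigma,\id{\Ar[n]})\colon\Ar[n]\to[1]\times\Ar[n]$, $(i\to j)\mapsto(\sigma(j),i\to j)$, is exactly the composite $(p\times\Ar[k])\circ(\Ar(\sigma),\id{\Ar[k]})$ the paper restricts along, so the two homotopies coincide. Your argument is correct, and in fact it supplies the details of the well-definedness check (that $H(F,\sigma)$ lands in $\S_n\der'$, via (Der2) and the $\eq$-condition) which the paper only asserts.
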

\begin{proof}
The idea is analogous to the definition of the simplicial enhancement in Proposition \ref{$K$-theory-as-a-simplicial-fun}. 
The required homotopy $\s_{\bullet} \phi \simeq \s_{\bullet}\phi'$ is a map $\s_{\bullet} \der\times\Delta[1] \to \s_{\bullet} \der'$ 
which we will specify by defining a natural transformation as follows
$$\xymatrix@C=30pt@R=8pt{&\Delta^{\op}\ar@/^8pt/[rd]^{\s_{\bullet} \der}\ar@{=>}[dd]_{\alpha}&
\\(\Delta\downarrow[1])^{\op}\ar@/^8pt/[ru]^{\text{source}}\ar@/_8pt/[rd]_{\text{source}}&&\operatorname{\mathtt{Set}},\\
&\Delta^{\op}\ar@/_8pt/[ru]_{\s_{ \bullet} \der'}&}$$
Recall that $\Psi$ is a strict cocontinuous morphism $\Psi\colon\der\r\der'([1]\times-)_{\eq}$. Given an object 
$\sigma \colon[k]\r[1]$ in $\Delta\downarrow[1]$, we define $\alpha(\sigma)$ to be the (co)restriction of the map on objects 
that comes from the following functor:
$$\xymatrix{
\der(\Ar[k])\ar[d]^{\Psi(\Ar[k])}\\
\der'([1]\times\Ar[k])_{\eq} \ar[d]^{(p\times\Ar[k])^{*}}\\
\der'(\Ar[1]\times\Ar[k])_{\eq} \ar[d]^{(\Ar(\sigma),\id{\Ar[k]})^{*}}\\
\der'(\Ar[k])
}$$
Here $p\colon \Ar[1] \to [1]$ is the functor defined by
$p(0,0) = 0$, 
$p(0,1) = 1$, 
$p(1,1) = 1$. 
The restriction of the composite functor to the $\s_{\bullet}$-construction is well-defined 
because the $1$-simplex $\Psi$ is in $\eqDER$ (and not merely in $\DER$). The naturality of $\alpha$ is 
straightforward to check.
\end{proof}

\begin{rem}\label{rem s-constr}
Moreover the functor $\der \mapsto \s_{\bullet} \der$ extends to a simplicial functor from $\eqDER$ to $\SSET$. 
The same argument works with $[1]$ replaced more generally by $[n]$ and $p$ by the functor $\Ar[n] \to [n]$, 
$(i \to j) \mapsto j$.
\end{rem}

An immediate consequence is the following invariance under strong equivalences.

\begin{cor} \label{invariance s-constr}
Let $\phi\colon \der \to \der'$ be a strong equivalence of pointed right derivators.
Then the induced maps $|\s_{\bullet}\phi|\colon |\s_{\bullet} \der| \to |\s_{\bullet} \der'|$ 
and $\wk(\phi)\colon \wk(\der) \to \wk(\der')$ are homotopy equivalences.
\end{cor}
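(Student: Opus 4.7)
The plan is to deduce the corollary from Lemma \ref{s-construction} and Remark \ref{rem s-constr} together with the definition of strong equivalence. Since $\wk(\der) = \Omega|\s_\bullet\der|$ and $\Omega$ preserves homotopy equivalences, it suffices to show that $|\s_\bullet\phi|\colon |\s_\bullet\der| \to |\s_\bullet\der'|$ is a homotopy equivalence, from which the assertion for $\wk(\phi)$ follows by applying $\Omega$.

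By definition of strong equivalence, I would pick a strict cocontinuous morphism $\psi\colon \der' \to \der$ together with zigzags of $1$-simplices in $\eqDER$ realizing $\psi\phi \simeq \id{\der}$ and $\phi\psi \simeq \id{\der'}$. Now Lemma \ref{s-construction} says that each such $1$-simplex $\Psi$ with endpoints $\phi_0, \phi_1$ produces a simplicial homotopy between $\s_\bullet\phi_0$ and $\s_\bullet\phi_1$ as maps $\s_\bullet\der \to \s_\bullet\der'$. Geometrically realizing (using $|\Delta[1]| = \Delta^1$) converts each such simplicial homotopy into a topological homotopy between the realized maps, and concatenating along the zigzag then yields a genuine topological homotopy.

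Combining this with the functoriality of $\s_\bullet$ on the underlying $1$-category $\eqs1Der$ of $\eqsDer$, one obtains
$$|\s_\bullet\psi| \circ |\s_\bullet\phi| \;=\; |\s_\bullet(\psi\phi)| \;\simeq\; |\s_\bullet\id{\der}| \;=\; \id{|\s_\bullet\der|}$$
and symmetrically $|\s_\bullet\phi| \circ |\s_\bullet\psi| \simeq \id{|\s_\bullet\der'|}$. Hence $|\s_\bullet\phi|$ is a homotopy equivalence with inverse $|\s_\bullet\psi|$, and $\Omega$ gives the statement for $\wk(\phi)$.

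There is no substantive obstacle beyond this bookkeeping: the real content was already packaged into Lemma \ref{s-construction} (and its reformulation in Remark \ref{rem s-constr} as simplicial functoriality of $\s_\bullet$). Conceptually, the proof is just the general fact that a simplicially enriched functor sends simplicial-homotopy equivalent morphisms to topologically homotopic maps after geometric realization; the only thing one must track carefully is that the zigzag in the definition of $\simeq$ survives this process, which it does because topological homotopy is an equivalence relation.
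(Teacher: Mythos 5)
Your proposal is correct and is exactly the unpacking the paper intends: the corollary is stated there as an ``immediate consequence'' of Lemma \ref{s-construction} (and Remark \ref{rem s-constr}), and your argument --- realize the simplicial homotopies coming from the $1$-simplices of $\eqDER$ witnessing $\psi\phi\simeq\id{\der}$ and $\phi\psi\simeq\id{\der'}$, concatenate along the zigzags, and then apply $\Omega$ --- is the standard bookkeeping being left to the reader. The one point worth stating explicitly, which you do correctly, is that for pointed right derivators the homotopy inverse $\psi$ and the connecting $1$-simplices must be taken in $\eqDER$ (cocontinuous and ``$\eq$''-valued) so that $\s_{\bullet}$ and Lemma \ref{s-construction} apply to them.
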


We can now return to the proof of Proposition \ref{s-vs-S}.

\begin{proof}[Proof of Proposition \ref{s-vs-S}] Since
$$|([n], [m]) \mapsto \S_{n, m} \der| \qquad \cong \qquad |[m] \quad\mapsto \quad |[n] \mapsto \s_n \der([m]\times-)_{\eq}||,$$
it suffices to show that every simplicial operator in the $m$-direction
is a weak equivalence of simplicial sets after realizing in the $n$-direction.
This follows from Corollary \ref{invariance s-constr} and Example \ref{path-objects}.
\end{proof}

\subsection{Agreement with Waldhausen $K$-theory} The agreement of $\WK$ with the Waldhausen $K$-theory of well-behaved 
Waldhausen categories is based on results about the homotopically flexible variations of the $\S_{\bullet}$-construction 
by Blumberg--Mandell \cite{lsaKttKt} and Cisinski \cite{ikted}. We recall that Waldhausen's original $\S_{\bullet}$-construction 
of a Waldhausen category $\mathcal{C}$ \cite{akts} is a simplicial Waldhausen category $[n] \mapsto \S_n \mathcal{C}$, where the objects 
of $\S_n \mathcal{C}$ are given by diagrams $F \colon \Ar[n] \to \mathcal{C}$ such that $F(i \to i)$ is the zero object for all 
$i \in [n]$, and for every $i \leq j \leq k$, the square
\[
 \xymatrix{
 F(i \to j) \ar[r] \ar[d] & F(i \to k) \ar[d] \\
 F(j \to j) \ar[r] & F(j \to k)
 }
\]
has cofibrations as horizontal maps and is required to be a pushout. Restricting degreewise 
to the subcategory of (pointwise) weak equivalences gives a simplicial category $[n] \mapsto w \S_n \mathcal{C}$. 
We denote by $N_{\bullet} w \S_n \mathcal{C}$ the nerve of $w \S_n \mathcal{C}$. Then the Waldhausen $K$-theory 
of $\mathcal{C}$ is defined to be the space $K(\mathcal{C})\colon = \Omega | N_{\bullet} w S_{\bullet} \mathcal{C} |$.

\begin{thm} \label{agreement}
Let $\mathcal{C}$ be a strongly saturated derivable Waldhausen category. Then there is a natural weak equivalence
$$K(\mathcal{C}) \stackrel{\sim}{\To} \WK(\der(\mathcal{C})).$$
\end{thm}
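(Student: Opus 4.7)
The plan is to construct an explicit comparison map of bisimplicial sets
$$\xi\colon N_\bullet w \S_\bullet \mathcal{C} \To \S_{\bullet\bullet} \der(\mathcal{C})$$
that is natural in exact functors of derivable Waldhausen categories, and to show that it becomes a weak equivalence after passing to diagonals (which, after looping, produces the required map of $K$-theory spaces). An $(n, m)$-bisimplex on the left is an $m$-chain of objectwise weak equivalences between Waldhausen $\S_n$-diagrams, and such data assembles into a single functor $[m] \times \Ar[n] \to \mathcal{C}$; its image in $\der(\mathcal{C})([m] \times \Ar[n]) = \ho(\mathcal{C}^{[m] \times \Ar[n]})$ lies in the $\eq$-subcategory (each transition in the $[m]$-direction is a weak equivalence, hence an isomorphism in the homotopy category) and satisfies condition $(\ast)$: the diagonal values are zero, and Waldhausen pushout squares along cofibrations descend to cocartesian squares in $\der(\square)$, using derivability via Theorem \ref{cisinski}.

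To verify that $\xi$ is a weak equivalence, I would apply the realization lemma to reduce, for each fixed $n$, to showing that the simplicial map
$$\xi_n\colon N_\bullet w \S_n \mathcal{C} \To \S_{n, \bullet} \der(\mathcal{C})$$
is a weak equivalence. The strategy is to factor $\xi_n$ through a homotopically flexible variant $\S_n^{\operatorname{flex}} \mathcal{C}$ whose objects are diagrams $\Ar[n] \to \mathcal{C}$ satisfying the $\S_n$-conditions only up to weak equivalence (arbitrary morphisms and homotopy cocartesian squares, rather than strict cofibrations and pushouts). The first comparison $N_\bullet w \S_n \mathcal{C} \to N_\bullet w \S_n^{\operatorname{flex}} \mathcal{C}$ is a weak equivalence by the flexibility theorems of Blumberg--Mandell \cite{lsaKttKt} and Cisinski \cite{ikted}, which apply precisely in the setting of strongly saturated derivable Waldhausen categories and whose role is signalled in the Introduction.

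The remaining step identifies $N_\bullet w \S_n^{\operatorname{flex}} \mathcal{C}$ with $\S_{n, \bullet} \der(\mathcal{C})$. Strong saturation ensures that isomorphisms in $\der(\mathcal{C})(X)$ coincide with weak equivalences in $\mathcal{C}^X$, so that the $\eq$-subcategory of $\der([m] \times \Ar[n])$ records precisely the data of $m$-chains of weak equivalences of flexible $\S_n$-diagrams up to homotopy coherence, while derivability provides the rectification needed to represent every object of $\der([m] \times \Ar[n])_{\eq}$ satisfying $(\ast)$ by an actual flexible diagram in $\mathcal{C}$. I expect this last identification to be the main obstacle, since it must bridge the strictly $1$-categorical data packaged by a derivator (isomorphism classes of objects in homotopy categories) with the genuinely simplicial data of a nerve of weak equivalences; it will rely on the standard principle — valid for well-behaved $(\mathcal{C}, \mathcal{W})$ under strong saturation — that the classifying space of $\mathcal{W}$ computes the classifying $\infty$-groupoid of the underlying homotopy theory, thereby providing the compatibility between the Waldhausen and the derivator-theoretic $\S_{\bullet}$-constructions.
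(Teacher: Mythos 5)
Your overall strategy is the same as the paper's: the comparison map is the one you describe, and the substantive external input is exactly the agreement of Waldhausen's $\S_{\bullet}$-construction with its homotopically flexible variant (Cisinski's $\S^h_{\bullet}$, Blumberg--Mandell's $\S'_{\bullet}$). However, the step you single out as ``the main obstacle'' --- identifying $N_{\bullet}w\S^{\mathrm{flex}}_n\mathcal{C}$ with $\S_{n,\bullet}\der(\mathcal{C})$ --- rests on a misreading of the definition, and the route you propose for it (rectification plus the principle that the classifying space of $\mathcal{W}$ computes the classifying $\infty$-groupoid) is neither needed nor would it work as stated. The point is that $\S_{n,m}\der(\mathcal{C})$ is defined as a \emph{set of objects} of a full subcategory of $\der(\mathcal{C})([m]\times\Ar[n])_{\eq}\subseteq\ho(\mathcal{C}^{[m]\times\Ar[n]})$, not a set of isomorphism classes; and since localization does not change objects, $\ob\ho(\mathcal{C}^{[m]\times\Ar[n]})=\ob\mathcal{C}^{[m]\times\Ar[n]}$. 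So every element of $\S_{n,m}\der(\mathcal{C})$ already \emph{is} an honest functor $[m]\times\Ar[n]\to\mathcal{C}$; there is nothing to rectify and no coherence to resolve. Strong saturation (of $\mathcal{C}$, hence of $\S_n\mathcal{C}$ and the relevant diagram categories) converts the $\eq$-condition ``the transition maps in the $[m]$-direction are isomorphisms in the homotopy category'' into ``they are objectwise weak equivalences,'' and condition $(\ast)$ is precisely the homotopical $\S_n$-condition (zero objects in $\ho\mathcal{C}$ on the diagonal, cocartesian squares in the derivator sense, which for $\der(\mathcal{C})$ are the homotopy pushout squares). Hence $\S_{n,\bullet}\der(\mathcal{C})\cong N_{\bullet}w\S^h_n\mathcal{C}$ is a degreewise \emph{bijection} of bisimplicial sets, not a weak equivalence requiring proof; the only nontrivial homotopical content of the theorem is the comparison $N_{\bullet}w\S_n\mathcal{C}\to N_{\bullet}w\S^h_n\mathcal{C}$, which is \cite[Proposition 4.3]{ikted} (cf.~\cite[Theorem 2.9]{lsaKttKt}).

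The reason your proposed fallback would fail is worth noting: the derivator evaluated at the point, $\der(\mathcal{C})(e)=\ho\mathcal{C}$, does \emph{not} determine the homotopy type of $N_{\bullet}w\mathcal{C}$, so no ``classifying space of $\mathcal{W}$'' principle can be extracted from the $1$-categorical data alone --- this is exactly why Waldhausen $K$-theory of derivators fails to be invariant under equivalences of derivators. What rescues the construction is that the $[m]$-direction of $\S_{\bullet\bullet}\der$ is built by evaluating the derivator on the \emph{shapes} $[m]\times\Ar[n]$, so that chains of weak equivalences are recorded on the nose as objects of diagram categories rather than recovered from $\ho\mathcal{C}$. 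Once you replace your last step by this definitional unwinding, your argument coincides with the paper's proof.
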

\begin{proof}
The map is induced by a bisimplicial map
$$N_m w \S_n \mathcal{C} \To \S_{n,m} \der(\mathcal{C})$$
which sends an element $[m] \times \Ar[n] \to \mathcal{C}$ in $N_m w \S_n \mathcal{C}$ to the corresponding
object of $\S_{n,m} \der$. Since $\mathcal{C}$ is strongly saturated, so are also the Waldhausen categories
$\S_n \mathcal{C}$ for every $n$. It follows that the bisimplicial set $\S_{\ast, \bullet} \der(\mathcal{C})$ is
isomorphic to the bisimplicial set $N_{\bullet} w S^h_{\ast} \mathcal{C}$ of \cite{ikted} (and also to the 
bisimplicial set $N_{\bullet} w S'_{\ast} \mathcal{C}$ of \cite{lsaKttKt} since every map can be replaced by a cofibration). 
Then the result follows from the agreement of the $\S^h_{\bullet}$-construction with the $\S_{\bullet}$-construction, 
see \cite[Proposition 4.3]{ikted} (cf.~\cite[Theorem 2.9]{lsaKttKt} under the assumption that factorizations are 
functorial).
\end{proof}

\section{Derivator $K$-theory}\numberwithin{equation}{subsection}

\subsection{Recollections and $2$-categorical properties}
Derivator $K$-theory was first defined for triangulated derivators by Maltsiniotis in \cite{ktdt}. The definition, however,
applies similarly to all pointed right derivators. Here we will consider the explicit model defined in terms of the 
$\S_{\bullet}$-construction which was introduced by Garkusha \cite{sdckt1, sdckt2}, who also showed that it is equivalent to Maltsiniotis's in the triangulated setting.

\begin{defn}
The \emph{derivator $K$-theory} of a pointed right derivator $\der$ is defined to be the space
$\DK(\der) := \Omega |N_{\bullet} \iso \S_{\bullet} \der|.$
\end{defn}

Since a cocontinuous strict morphism $\phi\colon \der\r\der'$ perserves cocartesian squares and zero objects, it
is can be easily  checked that derivator $K$-theory defines a functor from $\eqs1Der$ to the category of topological 
spaces. Moreover, it is invariant under equivalences of derivators. 

\begin{prop}
If the strict morphism $\phi \colon \der \to \der'$ is an equivalence of pointed right derivators, then the 
induced map $\DK(\phi) \colon \DK(\der) \to \DK(\der')$ is a weak equivalence.
\end{prop}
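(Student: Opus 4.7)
My plan is to reduce the proposition to showing that $\phi$ induces an equivalence of categories $\S_n \phi \colon \S_n \der \to \S_n \der'$ for every $n \geq 0$. Granted this, passing to the maximal subgroupoids yields an equivalence $\iso \S_n \der \to \iso \S_n \der'$ for each $n$, and applying the nerve produces a levelwise weak equivalence of bisimplicial sets $N_{\bullet} \iso \S_{\bullet} \phi$. Geometric realization preserves levelwise weak equivalences, and $\Omega$ preserves weak equivalences between such realizations, so $\DK(\phi)$ is a weak equivalence.

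For the key categorical step, I would first check that $\phi(\Ar[n])$ restricts to a functor $\S_n \der \to \S_n \der'$. Since $\phi$ is strict, pointed and cocontinuous, it commutes with the relevant inverse image functors $i_{\Ar[n], i \to i}^{*}$ and $i_{\ulcorner}^{*}$, preserves zero objects, and sends cocartesian squares to cocartesian squares; thus the defining conditions (i) and (ii) of $\S_n$ are preserved. Full faithfulness of $\S_n \phi$ is then automatic, since $\S_n \der \subset \der(\Ar[n])$ and $\S_n \der' \subset \der'(\Ar[n])$ are full subcategories and $\phi(\Ar[n])$ is fully faithful by hypothesis. For essential surjectivity, given $F' \in \S_n \der'$, I would use essential surjectivity of $\phi(\Ar[n])$ to pick $F \in \der(\Ar[n])$ with $\phi(\Ar[n])(F) \cong F'$, and then verify that $F$ itself lies in $\S_n \der$. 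For condition (i), strictness gives $\phi(e)(i_{\Ar[n], i \to i}^{*} F) = i_{\Ar[n], i \to i}^{*} \phi(\Ar[n])(F)$, which is a zero object; since $\phi(e)$ is an equivalence of categories, it reflects zero objects. For condition (ii), the canonical counit $(i_{\ulcorner})_{!} i_{\ulcorner}^{*} F \to F$ is carried by $\phi(\square)$ to the corresponding counit for $\phi(\square)(F)$, using cocontinuity to identify $\phi(\square) (i_{\ulcorner})_{!} \cong (i_{\ulcorner})_{!} \phi(\ulcorner)$ and strictness to identify $\phi(\ulcorner) i_{\ulcorner}^{*} = i_{\ulcorner}^{*} \phi(\square)$; the image counit is an isomorphism because $\phi(\square)(F) \cong F'$ is cocartesian, and $\phi(\square)$, being an equivalence, reflects isomorphisms.

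The main obstacle I expect is exactly this reflection argument in the essential surjectivity step: one needs each defining condition of $\S_n$ to be not merely preserved but also \emph{reflected} by $\phi$, which crucially requires the combination of strictness, cocontinuity, and the condition that each $\phi(X)$ is an equivalence of categories (so reflects zero objects and isomorphisms). Once these verifications are in place, the homotopical packaging from categorical equivalences up to $\DK(\phi)$ is routine and parallels the argument of Corollary~\ref{invariance s-constr}.
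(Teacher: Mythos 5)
Your proposal is correct and follows essentially the same route as the paper: the paper's (one-line) proof simply invokes the fact that geometric realization of simplicial categories sends pointwise equivalences to weak equivalences, taking for granted the pointwise equivalences $\S_n\der\to\S_n\der'$ that you verify in detail. Your careful check that the defining conditions of $\S_n$ are both preserved and reflected (via strictness, cocontinuity, and reflection of zero objects and isomorphisms by the equivalences $\phi(X)$) is exactly the content left implicit there.
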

\begin{proof} This is an immediate consequence of the fact that the geometric 
realization of simplicial categories sends pointwise (weak) equivalences to 
weak equivalences of spaces.
\end{proof}

We emphasize that an equivalence of right pointed derivators does not necessarily admit a \emph{strict} inverse. This means that an
equivalence in $\Der$ is not in general an ($2$-categorical) equivalence in $\sDer$. However, the two concepts are closely related as morphisms of prederivators can be strictified up
to a strict equivalence in $\PreDer$, see \cite[Proposition 10.14]{adkt}. 

Derivator $K$-theory is compatible with the $2$-categorical structure of $\eqsDer$. We show next how to enhance derivator 
$K$-theory to a simplicial functor from $N_{\bullet}\eqsDer$ to $\TOP$.  

\begin{prop}\label{2functor}
Derivator $K$-theory extends to a simplicial functor
$$\DK \colon N_{\bullet}\eqsDer \To \TOP.$$
\end{prop}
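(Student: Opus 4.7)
The plan is to mimic the strategy of Proposition 4.4, adapted to the $2$-categorical enrichment. An $n$-simplex of $N_{\bullet}\eqsDer(\der,\der')$ is by definition a chain $\phi_0 \rr \phi_1 \rr \cdots \rr \phi_n$ of composable invertible modifications between cocontinuous strict morphisms $\phi_i\colon\der\to\der'$, and I need to associate to it, naturally in $[n]$ and compatibly with composition, a continuous map $\DK(\der)\times\Delta^n \to \DK(\der')$, i.e., an $n$-simplex in $\TOP(\DK(\der),\DK(\der'))$.

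The key observation is that $\iso\S_{\bullet}$ is $2$-functorial on $\eqsDer$. A single invertible modification $\tau\colon\phi\rr\phi'$ restricts, at $\Ar[k]$, to a natural isomorphism between the induced functors $\S_k\phi,\S_k\phi'\colon\S_k\der\to\S_k\der'$ (the target makes sense since $\S_k\der\subseteq\der(\Ar[k])$ is full and cocontinuous strict morphisms preserve zero objects and cocartesian squares). Compatibility of $\tau$ with the prederivator structure of $\der'$ in the variable $k$ assembles these componentwise isomorphisms into a natural isomorphism of simplicial functors $\S_{\bullet}\phi\rr\S_{\bullet}\phi'$. Applied to the whole chain, this yields $n$ composable natural isomorphisms, equivalently a simplicial functor $\S_{\bullet}\der\times[n]\to\S_{\bullet}\der'$ with $[n]$ regarded as constant in the simplicial direction; since every natural transformation in play is invertible, it restricts to the isomorphism subcategories and produces a simplicial functor $\iso\S_{\bullet}\der\times[n]\to\iso\S_{\bullet}\der'$.

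Taking the nerve in the $\S$-direction (which preserves products) then gives a bisimplicial map $N_{\bullet}\iso\S_{\bullet}\der\times\Delta[n]\to N_{\bullet}\iso\S_{\bullet}\der'$; geometric realization together with the canonical homeomorphism $|\Delta[n]|\cong\Delta^n$, followed by the loop functor $\Omega$, produces the required $n$-simplex of $\TOP(\DK(\der),\DK(\der'))$. It remains to verify the simplicial and composition laws. Naturality in $[n]$ is automatic, since a morphism $\sigma\colon[m]\to[n]$ in $\Delta$ acts on a chain of modifications by reindexing, and this matches precomposition with $\Delta[\sigma]$ on the bisimplicial side. Composition in $N_{\bullet}\eqsDer$ is induced by horizontal composition of invertible modifications; under the $2$-functoriality of $\iso\S_{\bullet}$ this corresponds to composition of natural isomorphisms of simplicial functors, which after nerving and realizing yields composition in $\TOP$. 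I do not anticipate any substantial obstacle: the argument is essentially a $2$-categorical shadow of the simplicial enhancement in Proposition 4.4, and the only genuine content is the (immediate) verification that $\iso\S_{\bullet}$ is a $2$-functor on $\eqsDer$.
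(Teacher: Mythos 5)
Your construction is correct and is essentially the paper's proof in more conceptual clothing: the explicit ``diagonal of the grid'' formula that the paper writes down for $\alpha_{*}(\sigma)$ is precisely the nerve of the functor $\iso \S_k\der\times[n]\to \iso\S_k\der'$ encoding the chain of modifications, so your appeal to the $2$-functoriality of $\iso\S_{\bullet}$ and product-preservation of $N_{\bullet}$ yields the same $n$-simplex of $\TOP(\DK(\der),\DK(\der'))$. The only points worth tightening are that compatibility with composition uses the diagonal $[n]\to[n]\times[n]$ (which the paper verifies via its three-dimensional cube), and that the nerve is taken in the groupoid direction, not the $\S_{\bullet}$-direction.
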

\begin{proof}
It suffices to construct a simplicial enhancement for the ($1$-)functor 
\begin{align*}
\eqs1Der &\To \mathtt{SSet},\\
\der&\;\mapsto \;\diag N_{\bullet}\iso\S_{\bullet}\der.
\end{align*}
Suppose we are given an $n$-simplex in  $\eqsDer(\der, \der')$
$$\alpha = (\phi_0 \stackrel{\alpha_1}{\Longrightarrow} \phi_1 \stackrel{\alpha_2}{\Longrightarrow} \cdots \stackrel{\alpha_n}{\Longrightarrow} \phi_n)$$
where
$$\xymatrix@C=30pt{\der\ar@/^15pt/[r]^{\phi_{k-1}}_{}="a"\ar@/_15pt/[r]_{\phi_{k}}^{}="b"&\der'\ar@{=>}"a";"b"_{\alpha_k}}$$
are invertible modifications. 
We construct a simplicial map
$$(\diag N_{\bullet} \iso \S_{\bullet})(\alpha) \colon (\diag N_{\bullet} \iso \S_{\bullet} \der) \times \Delta[n] \to (\diag N_{\bullet} \iso \S_{\bullet} \der')$$
by defining a natural transformation as follows
$$\xymatrix@C=30pt@R=8pt{&\Delta^{\op}\ar@/^8pt/[rd]^{\diag N_{\bullet} \iso\S_{\bullet}\der}\ar@{=>}[dd]_{\alpha_{*}}&
\\(\Delta\downarrow[n])^{\op}\ar@/^8pt/[ru]^{\text{source}}\ar@/_8pt/[rd]_{\text{source}}&&\operatorname{Set}\\
&\Delta^{\op}\ar@/_8pt/[ru]_{\diag N_{\bullet} \iso\S_{\bullet}\der'}&}$$
Given $\sigma\colon[k]\r[n]$ in $\Delta$, the map
$$\alpha_{*}(\sigma)\colon N_{k} \iso \S_{k}\der\To N_{k} \iso \S_{k}\der'$$
is defined as follows. Let 
$$\beta=\sigma^{*}(\alpha) = (\psi_0 \stackrel{\beta_1}{\Rightarrow} \psi_1 \stackrel{\beta_2}{\Rightarrow} \cdots \stackrel{\beta_k}{\Rightarrow} \psi_k)$$
Consider an element in the domain of $\alpha_*(\sigma)$, denoted by $(f_{1},\dots, f_{k})$, which is a chain of $k$ composable 
isomorphisms in $\S_{k}\der\subset \der(\Ar[k])$,
$$\xymatrix@C=10pt{X_{0}\ar[r]& \cdots\ar[r] &  X_{r-1}\ar[rr]^-{f_{r}}&&  X_{r}\ar[r]& \cdots\ar[r]&  X_{k}.}$$
The $k$-simplex $\beta$ gives rise to a $k\times k$ grid of commutative squares of solid arrows in $\S_{k}\der'\subset \der'(\Ar[k])$
$$\xymatrix@R=20pt@C=10pt{\psi_{0}(X_{0})\ar[r]\ar[d]\ar@{-->}[rd]& \cdots\ar[r] &  \psi_{0}(X_{r-1})\ar[rr]^-{\psi_{0}(f_{r})}\ar[d]&&  \psi_{0}(X_{r})\ar[r]\ar[d]& \cdots\ar[r]&  \psi_{0}(X_{k})\ar[d]\\
\vdots\ar[d]&\ddots\ar@{-->}[rd]&\vdots\ar[d]&&\vdots\ar[d]&&\vdots\ar[d]\\
\psi_{r-1}(X_{0})\ar[r]\ar[dd]_{\beta_{r}(X_{0})}& \cdots\ar[r] &  \psi_{r-1}(X_{r-1})\ar[rr]^-{\psi_{r-1}(f_{r})}\ar[dd]_{\beta_{r}(X_{r-1})}\ar@{-->}[rrdd]&&  \psi_{r-1}(X_{r})\ar[r]\ar[dd]^{\beta_{r}(X_{r})}& \cdots\ar[r]&  \psi_{r-1}(X_{k})\ar[dd]^{\beta_{r}(X_{k})}\\
&&&&&&\\
\psi_{r}(X_{0})\ar[r]\ar[d]& \cdots\ar[r] &  \psi_{r}(X_{r-1})\ar[rr]_-{\psi_{r}(f_{r})}\ar[d]&&  \psi_{r}(X_{r})\ar[r]\ar[d]\ar@{-->}[rd]& \cdots\ar[r]&  \psi_{r}(X_{k})\ar[d]\\
\vdots\ar[d]&&\vdots\ar[d]&&\vdots\ar[d]&\ddots\ar@{-->}[rd]&\vdots\ar[d]\\
\psi_{k}(X_{0})\ar[r] & \cdots\ar[r] &  \psi_{k}(X_{r-1})\ar[rr]^-{\psi_{k}(f_{r})} &&  \psi_{k}(X_{r})\ar[r] & \cdots\ar[r]&  \psi_{k}(X_{k}) }$$
We set $\alpha_{*}(\sigma)(f_{1},\dots, f_{k})$ to be the sequence of $k$ diagonal morphisms, depicted as dashed arrows,
$$\xymatrix@C=15pt{\psi_{0}(X_{0})\ar[r]& \cdots\ar[r] &  \psi_{r-1}(X_{r-1})\ar[rrrr]^-{\beta_{r}(X_{r})\psi_{r-1}(f_{r})}_-{\psi_{r}(f_{r})\beta_{r}(X_{r-1})}&&&&  \psi_{r}(X_{r})\ar[r]& \cdots\ar[r]&  \psi_{k}(X_{k}).}$$
The naturality of $\alpha_{*}$ in $\sigma$ is straightforward. For the compatibility with composition, we consider an $n$-simplex in $N_{\bullet} \eqsDer (\der', \der'')$,
$$\alpha' = (\phi'_0 \stackrel{\alpha'_1}{\Longrightarrow} \phi'_1 \stackrel{\alpha'_2}{\Longrightarrow} \cdots \stackrel{\alpha'_n}{\Longrightarrow} \phi'_n)$$
and then it suffices to check that for all $\sigma: [k] \to [n]$ in $\Delta \downarrow [n]$, we have
$$(\alpha' \alpha)_*(\sigma) = \alpha'_*(\sigma) \alpha_*(\sigma).$$
Indeed if $\beta'=\sigma^{*}(\alpha') = (\psi'_0 \stackrel{\beta'_1}{\Rightarrow} \psi'_1 \stackrel{\beta'_2}{\Rightarrow} \cdots \stackrel{\beta'_k}{\Rightarrow} \psi'_k)$
then each of the maps above, when applied to an element $(f_1, \dots, f_k) \in N_k \iso \S_k \der$, gives 
$$\xymatrix{
\psi'_0\psi_{0}(X_{0})\ar[r] & \cdots\ar[r] &  \psi'_{r-1}\psi_{r-1}(X_{r-1})
\ar[d]_-{(\beta'_r\beta_{r})(X_{r})(\psi'_r\psi_{r-1})(f_{r})}^-{\beta'_r(\psi_r(X_r))\psi'_{r-1}(\psi_{r}(f_r)\beta_{r}(X_{r-1}))} && \\
& & \psi'_{r}\psi_r(X_{r})\ar[r]& \cdots\ar[r]&  \psi'_k\psi_{k}(X_{k}).
}
$$
This $k$-simplex can be obtained as a diagonal in a $3$-dimensional cube, in the same way that $\alpha_{*}(\sigma)(f_{1},\dots, f_{k})$ is a diagonal in a square. Therefore, the vertical map can be written in six different ways. We have just chosen two of them.
\end{proof}

\begin{rem}
The proposition shows that the homotopy class of the morphism $\DK(\phi) \colon \DK(\der) \to \DK(\der')$ depends only on the 
isomorphism class of $\phi \colon \der \to \der'$ in the $2$-category $\eqsDer$. This together with the invariance of derivator $K$-theory 
under equivalences implies that derivator $K$-theory is in fact functorial in the homotopy category of spaces with respect 
to \emph{all} morphisms of derivators. More precisely, if for a category $\mathcal G$ which is enriched in groupoids, we denote by  
$\pi_0\mathcal G$ the $1$-category obtained by identifying isomorphic morphisms, then there exists a unique factorization
$$\xymatrix{\pi_0\eqsDer\ar[r]^{\DK}\ar[d]_{\pi_0\rho}&\top/\!\simeq\\
\pi_0\eqDer\ar[ru]}$$
Here $\top/\!\simeq$ is the homotopy category of topological spaces. Compare \cite[Corollary 10.19]{adkt}.  
\end{rem}

\begin{rem} \label{coherent-eq-vs-eq}
Derivator $K$-theory $\DK(\der)$ is weakly equivalent to the geometric realization of Waldhausen $K$-theories 
$$|[n] \mapsto \wk(\iso_n \der)|.$$
Note that the derivators $\{\iso_n \der\}_{n \geq 0}$ are equivalent in $\Der$ and the simplicial operators are 
equivalences of derivators. Therefore, given that Waldhausen and derivator $K$-theory are different in general 
\cite{ankttd}, it follows that Waldhausen $K$-theory is not invariant under equivalences of derivators. In particular, 
it follows that there are equivalences of (pre)derivators which are not strong, and more specifically, there are 
derivators $\der$ such that the canonical `degeneracy' equivalence $\der \to \iso_n \der$ is not a strong equivalence. 
\end{rem}

In the case where $\der = \der(\mathcal{C})$ for some derivable Waldhausen category 
$(\mathcal{C}, co \mathcal{C}, w \mathcal{C})$, the following variant of derivator $K$-theory is available. 
Passing to the homotopy categories of the $\S_{\bullet}$-construction, we obtain a new simplicial category, 
$[n] \mapsto \ho{\S_{n} \mathcal{C}}$,
and a canonical morphism of simplicial categories $\ho{\S_{\bullet} \mathcal{C}} \to \S_{\bullet} \der(\mathcal{C})$.
This is degreewise an equivalence of categories and therefore the induced map 
$$\Omega|N_{\bullet} \iso \ho{\S_{\bullet} \mathcal{C}}| \stackrel{\sim}{\To} \DK(\der(\mathcal{C}))$$
is a weak equivalence. 

\subsection{Comparison with Waldhausen $K$-theory} There is a natural comparison map from Waldhausen to derivator $K$-theory. 
For $n,m \geq 0$, the functors 
$$\dia_{[m], \Ar[n]} \colon \der([m] \times \Ar[n]) \to \cat([m], \der(\Ar[n]))$$ 
assemble to define a bisimplicial map
$$\S_{\bullet \bullet} \der \To N_{\bullet} \iso \S_{\bullet} \der$$
which then induces the comparison map from Waldhausen $K$-theory to derivator $K$-theory (cf.~\cite{ktdt,sdckt2,ankttd})
$$\mu \colon \WK(\der) \To \DK(\der).$$
We note that composing with the weak equivalence $\iota\colon \wk(\der) \to \WK(\der)$, we obtain
\[
\xymatrix{
\wk(\der) \ar[rd]_{\muob} \ar[r]^-{\iota}_-{\sim} & \WK(\der) \ar[d]^{\mu} \\
& \DK(\der)
}
\]
where the map $\muob$ is given degreewise simply by the inclusion of objects. The comparison maps $\mu$ and $\muob$ define natural 
transformations. Moreover, $\mu$ defines a natural transformation of simplicially enriched functors
$$
\xymatrix@R=15pt{
\eqDER \ar[rd]^{\WK}_-{\;}="a" \ar[dd]_{\rho}^-{\;}="b" & \\
& \TOP \\
N_{\bullet}\eqsDer \ar[ru]_{\DK}^-{\;}="b" &  \ar@{=>}"a";"b"_{\mu}
}
$$
and the same holds for $\muob$, cf.~Lemma \ref{s-construction} and Remark \ref{rem s-constr}. However, making use of these simplicial 
enrichments will not be required in what follows since it is possible to think of them, in a homotopical fashion, only as asserting 
certain invariance properties. We will concentrate instead on the natural transformation
$$
\xymatrix@C=40pt{
\eqs1Der\ar@/^15pt/[r]^{\wk}_-{\;}="a" \ar@/_15pt/[r]_{\DK}^-{\;}="b" &   \top \\ \ar@{=>}"a";"b"_{\muob}
}
$$
because this is technically a more convenient model of the comparison map for the statement of our results. Here $\top$ is the ordinary 
category of topological spaces.

In connection with the diagram above, it is interesting to mention that To\"{e}n--Vezzosi \cite{ktsc} gave a neat abstract argument, based only 
on functoriality, to show that Waldhausen $K$-theory cannot factor through $N_\bullet \eqsDer$ by a functor which is invariant under equivalences 
of derivators.

Maltsiniotis \cite{ktdt} conjectured that $\mu$ is a weak equivalence when $\der$ is the triangulated derivator associated with 
an exact category \cite{dtce}. This conjecture remains open, but several relevant results are known. Garkusha \cite{sdckt1}, 
based on previous results by Neeman on the $K$-theory of triangulated categories, showed that $\mu$ admits a retraction when 
$\der$ arises from an abelian category. Maltsiniotis \cite{ktdt} and Muro \cite{malt} showed
that $\mu$ induces an isomorphism on $\pi_0$ and $\pi_1$, respectively, for any $\der$ that arises from a strongly saturated 
derivable Waldhausen category. In \cite{ankttd}, we showed that $\mu$ fails to be a weak equivalence in general for triangulated derivators
that arise from differential graded algebras (or stable module categories). Moreover, we showed that the conjecture fails if 
derivator $K$-theory satisfies \emph{localization}, a property also conjectured by Maltsiniotis \cite{ktdt}. 

However, the pair $(\DK, \mu)$ turns out to be the best approximation to Waldhausen $K$-theory by a functor which
sends equivalences of derivators to weak equivalences. We choose a rather \emph{ad hoc} but direct way of 
formulating this property precisely as follows. 

First, in order to ensure that our categories remain locally small and so to avoid set-theoretical troubles, we fix a 
(small) set $S$ of pointed right derivators $\der$ closed under taking $\iso \der$, and restrict to the full subcategory of $\eqs1Der$ 
spanned by $S$, that we still denote by $\eqs1Der$. Second, it will be more convenient to work here with simplicial techniques 
and the delooped versions of Waldhausen and derivator $K$-theory. Thus we set:
$$\Omega^{-1} \WK (\der) \colon = \diag \S_{\bullet \bullet} \der,$$
$$\Omega^{-1} \wk (\der) \colon = \s_{\bullet} \der,$$
$$\Omega^{-1} \DK(\der) \colon = \diag \iso_{\bullet} \S_{\bullet} \der,$$
and we have a natural transformations $\mu \colon \Omega^{-1} \WK \Rightarrow \Omega^{-1} \DK$ and $\muob \colon \Omega^{-1} \wk \Rightarrow \Omega^{-1} \DK$ and a natural weak equivalence
$\iota \colon \Omega^{-1} \wk \Rightarrow \Omega^{-1}\WK$ with $\muob=\mu\iota$. 

\begin{defn} 
Let $\SSet^{\eqs1Der}$ be the functor category. The \emph{category $\facto$ of invariant approximations} to Waldhausen $K$-theory $\Omega^{-1}\wk$ is the full subcategory of the comma category $\Omega^{-1}\wk\downarrow\SSet^{\eqs1Der}$ spanned by the objects $\eta\colon\Omega^{-1}\wk\rr F$ such that $F\colon  \eqs1Der \to \SSet$ sends equivalences of derivators to weak equivalences. A morphism 
$$\xymatrix{&\Omega^{-1}\wk\ar@{=>}[ld]_\eta\ar@{=>}[rd]^{\eta'}&\\
F\ar@{=>}[rr]_u&&F'}$$
in $\facto$ is a \emph{weak equivalence} if the components of $u$ are weak equivalences of simplicial sets.
\end{defn} 

Note that $\muob\colon \Omega^{-1}\wk\rr \Omega^{-1}\DK$ is an object of $\facto$.  Following \cite{hlfmchc}, we say that an object $X$ of a category with weak equivalences 
$(\mathcal{C}, \mathcal{W})$ (satisfying in addition the ``2-out-of-6'' property) 
is \emph{homotopically initial} if there are homotopical functors $F_0, F_1 \colon \mathcal{C} \to \mathcal{C}$ and a natural transformation 
$f \colon F_0 \Rightarrow F_1$ such that: (i) $F_0$ is naturally weakly equivalent to the constant functor at $X$, (ii) $F_1$ is 
naturally weakly equivalent to the identity functor on $\mathcal{C}$ and (iii) $f_X \colon F_0(X) \to F_1(X)$ is a weak equivalence. If $X$ is initial in $\mathcal{C}$,
then it is also homotopically initial in this sense. If $X$ is homotopically initial in $\mathcal{C}$, then $X$ is initial in $\ho{\mathcal{C}}$. Finally,
the category of homotopically initial objects in $(\mathcal{C}, \mathcal{W})$ is either empty or homotopically contractible. We 
refer the reader to \cite{hlfmchc} for more details. 

\begin{thm}\label{initial}
The object $\muob\colon \Omega^{-1}\wk\rr \Omega^{-1}\DK$ is homotopically initial in the category with weak equivalences $\facto$. 
\end{thm}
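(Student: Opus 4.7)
The plan is to produce the data $(F_0, F_1, f)$ witnessing homotopy-initiality. The key tool is the endofunctor $T$ on $\SSet^{\eqs1Der}$ defined by
$$T(F)(\der) = \diag\bigl([n] \mapsto F(\iso_n \der)\bigr),$$
together with the natural transformation $\iota\colon \operatorname{id} \Rightarrow T$ whose component at $F$ is induced by the canonical degeneracy morphisms $\der \to \iso_n \der$ (inclusion of identity chains).

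First I would record two identifications that follow by unwinding definitions and using $\s_m(\iso_n \der) = \ob \S_m(\iso_n \der) = N_n \iso \S_m \der$: (a) $T(\Omega^{-1}\wk) = \Omega^{-1}\DK$ strictly, and (b) under this identification, the component $\iota_{\Omega^{-1}\wk}$ agrees with $\muob$, since both send an object of $\S_p \der$ to the constant chain of identities on it. I would next verify that $T$ lifts to a homotopical endofunctor of $\facto$ via $(F,\eta) \mapsto (T(F), T(\eta) \circ \muob)$, which is well-defined because $T(F)$ is invariant under equivalences of derivators whenever $F$ is, and that $\iota_F\colon F \to T(F)$ is a pointwise weak equivalence for every $F \in \facto$, because each $F(\der) \to F(\iso_n \der)$ is a weak equivalence by invariance of $F$, so that the simplicial object $[n] \mapsto F(\iso_n \der)$ is homotopically constant.

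Setting $F_0 = \operatorname{const}_{(\Omega^{-1}\DK,\muob)}$ and $F_1 = T$, I would define $f\colon F_0 \Rightarrow F_1$ with component $f_{(F,\eta)} = T(\eta)\colon \Omega^{-1}\DK = T(\Omega^{-1}\wk) \to T(F)$; the naturality of $f$ and the fact that $f_{(F,\eta)}$ is a morphism in $\facto$ both follow from functoriality of $T$. Conditions (i) and (ii) of homotopy-initiality are then immediate: $F_0$ is constant at $\muob$ by construction, and $F_1 = T \simeq \operatorname{id}_{\facto}$ via the natural weak equivalence $\iota$.

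The main obstacle is condition (iii): showing that $f_\muob = T(\muob)\colon \Omega^{-1}\DK \to T(\Omega^{-1}\DK)$ is a weak equivalence. My plan is to compare $T(\muob)$ with $\iota_{\Omega^{-1}\DK}$, which is already a weak equivalence by invariance of $\Omega^{-1}\DK$. At the level of $p$-simplices, both maps land in $\fun([p]\times[p], \iso\S_p \der)$; a direct unwinding shows that $T(\muob)$ sends a chain $X \in N_p \iso \S_p \der$ to $(a,b) \mapsto X(b)$, while $\iota_{\Omega^{-1}\DK}$ sends it to $(a,b) \mapsto X(a)$. The two maps therefore differ by the coordinate swap on the trisimplicial set $[k,n,m]\mapsto \fun([k]\times[n], \iso\S_m\der)$, which descends to a simplicial automorphism of the diagonal $T(\Omega^{-1}\DK)(\der)$. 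Hence $T(\muob)$ differs from $\iota_{\Omega^{-1}\DK}$ by a simplicial isomorphism and is therefore a weak equivalence, completing the proof.
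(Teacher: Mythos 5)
Your proposal is correct and follows essentially the same route as the paper: your $T$ is the paper's $\mathbb{L}$-construction, your lift $(F,\eta)\mapsto(T(F),T(\eta)\circ\muob)$ is the paper's endofunctor $\mathbb{L}'$ of $\facto$, and the triple $(\mathrm{const}_{\muob},\,T,\,f)$ with $f_{(F,\eta)}=T(\eta)$ is exactly the witness used there. The one point where you go beyond the written proof is condition (iii): the paper concludes with ``hence the result follows'' without explicitly checking that $\mathbb{L}\muob\colon\Omega^{-1}\DK\to\mathbb{L}\Omega^{-1}\DK$ is a weak equivalence, whereas you supply a correct argument --- it differs from the weak equivalence $\iota_{\Omega^{-1}\DK}$ precisely by the simplicial automorphism of $\diag\bigl([n],[m],[l]\bigr)\mapsto\fun([m]\times[n],\iso\S_l\der)$ induced by swapping the two nerve coordinates --- which is a worthwhile addition.
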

\begin{proof}
 Let $F \colon  \eqs1Der \to \SSet$ be 
a functor. Then there is a canonical way of associating to $F$ a new functor
\begin{align*}
\mathbb{H}F \colon  \eqs1Der &\To \SSet,\\ 
\der &\;\mapsto\; \diag F(\iso_{\bullet} \der).
\end{align*}
The inclusion of 
$0$-simplices defines a natural transformation $$\iota_F\colon F \Longrightarrow \mathbb{H}F.$$  
By definition, we have $$\iota_{\Omega^{-1}\wk}=\muob\colon \Omega^{-1}\wk\Longrightarrow \mathbb H \Omega^{-1}\wk=\Omega^{-1}\DK.$$
If $F$ sends equivalences of derivators to weak 
equivalences then the simplicial operators of $F(\iso_{\bullet} \der)$ in the $\iso_\bullet$-direction are weak equivalences, 
so $\iota_F$ is a natural weak equivalence. In this case, it follows that $\mathbb{H}F$ also sends equivalences of derivators to weak 
equivalences. Using this fact, we can view the $\mathbb H$-construction as an endofunctor, denoted $\widetilde{\mathbb H} \colon\facto\r\facto$, 
which sends an object $\eta\colon \Omega^{-1}\wk\rr F$ in $\facto$, to the natural transformation $\widetilde{\mathbb H}(\eta)$ given by the diagonal in 
the following commutative square
$$\xymatrix{\Omega^{-1}\wk\ar@{=>}[r]^-\eta\ar@{=>}[d]_\muob&F\ar@{=>}[d]^{\iota_{F}}_-\sim\\
\Omega^{-1}\DK\ar@{=>}[r]^-{\mathbb H\eta}&\mathbb H F}$$
The natural transformation $\iota$ induces a natural weak equivalence $\iota'\colon\id{\facto}\rr\widetilde{\mathbb H}$ given by the right 
vertical arrow, and the bottom horizontal arrow defines a natural transformation from the constant functor at
$\muob\colon \Omega^{-1}\wk\rr \Omega^{-1}\DK$ to $\widetilde{\mathbb H}$. Hence, the result follows.
\end{proof}

We wish to remark that we could have worked entirely with simplicially enriched categories and functors in this section. More specifically, the construction 
$\mathbb H F$ in the proof of the last theorem has a simplicial enhancement which can be constructed as in the proof of Proposition \ref{2functor}. We decided 
to work with $1$-categories in order to avoid the ensuing technicalities.

\section{Some open questions}\numberwithin{equation}{subsection}

\subsection{Derivators and the homotopy theory of homotopy theories}\label{renaudin} The simplicial enrichment of the category of derivators 
leads to a homotopy theory of derivators which is more discerning than the $2$-categorical one and is closer to the homotopy
theory of categories with weak equivalences. An interesting problem is to understand exactly how close this relationship is,
and find out whether this homotopy theory of derivators is rich and structured enough to be (or contain a part of) a model for 
the homotopy theory of homotopy theories. In the case of the $2$-category of derivators, a theorem of Renaudin \cite{pcthqd} 
specified the relationship between combinatorial model categories and their associated derivators (see also Appendix \ref{appA}
for a review). In this context, the question would be whether this result can be improved in view of the simplicial enrichment 
of derivators. The results of Appendix \ref{appA} may be a first step towards this direction.

\subsection{Derived equivalences vs.~strong equivalences} We do not know whether an exact functor of well-behaved Waldhausen categories
which is a derived equivalence also induces a strong equivalence between the associated pointed right derivators. This is clear in the 
case where the derived equivalence admits a homotopy inverse (cf.~Remark \ref{homotopy-eq-coherent-eq}), but such an inverse may not exist 
strictly at the level of models in general. If the statement is true, then we will be able to deduce the invariance of Waldhausen $K$-theory 
(of Waldhausen categories) under derived equivalences also from the invariance of Waldhausen $K$-theory of pointed right derivators under 
strong equivalences. 


\subsection{Additivity for derivator $K$-theory}

The additivity of derivator $K$-theory was proved by Cisinski and Neeman for triangulated derivators \cite{adkt}. However, the 
more general case of additivity for derivator $K$-theory of pointed right derivators seems to remain an open problem. We emphasize 
that this seems to be so also in the case where the derivator admits a model. 
In this case, we tried to apply Waldhausen's original proof and generalize the approach in \cite{sdckt2}, but we discovered a 
gap in the proof of \cite[Theorem 3.1]{sdckt2} which we could not fix. (Namely, in diagram (7), at the bottom of page 655, the 
arrow $\varphi^*_{X_i}v_ic_i\colon V_i'' \r\bar{X}_i$ need not be a weak equivalence.) In particular, we do not know whether derivator 
$K$-theory of pointed right derivators is invariant under an appropriately defined notion of stabilization which would produce a triangulated 
derivator. 

A related problem is to show that additivity holds for the Waldhausen $K$-theory of pointed right derivators. Of course, this
is true if the derivator admits a model. However it would still be interesting to establish the general case as it is through 
this generality that the concept of derivator can also be tested.

\appendix
\numberwithin{equation}{section}

\section{Combinatorial model categories and derivators}\label{appA}

The purpose of this appendix is to highlight some results on the connections between combinatorial model categories and 
derivators. Since the discussion is heavily based on Renaudin's paper \cite{pcthqd}, we will give a very concise 
review of his results while providing precise references where necessary. Then we will record some minor strengthenings 
of Renaudin's main theorem with a view to addressing the questions of \ref{renaudin}. 

Let $\MOD$ denote the $2$-category of left proper combinatorial model categories, Quillen adjunctions and natural transformations
between left Quillen functors. Following \cite{pcthqd}, we view the morphism categories as categories with weak equivalences
where the weak equivalences are given by \textit{Quillen homotopies}. We recall that a natural transformation of left Quillen functors
is a Quillen homotopy if it is pointwise a weak equivalence at the cofibrant objects (see \cite[D\'{e}finition 2.1.2]{pcthqd}). 
Passing to the homotopy categories of all morphism categories yields a new $2$-category $\underline{\MOD}$. We note that 
$\underline{\MOD}$ is enriched in the category of all categories $\CAT$. 

The class of Quillen equivalences in $\underline{\MOD}$ admits a calculus of right fractions \cite[Proposition 2.3.2]{pcthqd}. Thus
the bilocalization of $\underline{\MOD}$ at the class of Quillen equivalences exists, denoted here by $\underline{\MOD}[\mathcal{Q}^{-1}]$, and 
is actually equivalent to the bilocalization of $\MOD$ at the class of Quillen equivalences \cite[Th\'eor\`eme 2.3.3]{pcthqd}. 

Let $\MOD^p$ be the $1$- and $2$-full subcategory of presentable model categories, that is, combinatorial model categories that arise from a 
left Bousfield localization of the projective model category of $C$-diagrams in $\SSet$, for some small category $C$, at a set of 
morphisms $S$. Every combinatorial model category is equivalent to a presentable one \cite{cmchp}. Presentable model categories have certain nice 
`cofibrancy' properties which can in particular be used to show that the $2$-functor
$$\underline{\MOD}^p \To \underline{\MOD}[\mathcal{Q}^{-1}]$$
is a biequivalence \cite[Proposition 2.3.4]{pcthqd}. Here $\underline{\MOD}^p$ denotes the corresponding $1$- and $2$-full subcategory of 
$\underline{\MOD}$. The restriction to presentable model categories in what follows is mainly a technical matter and owes essentially to the 
rigidity of $\MOD$ compared say to the essentially equivalent context of presentable $\infty$-categories.

Let $\BigDer$ (resp.~ $\BigDer_{!}$, $\BigDer_{ad}$) denote the $2$-category of derivators with domain $\Dia=\cat$ and values 
in the $2$-category $\CAT$ together with pseudo-natural transformations (resp.~ cocontinuous morphisms, adjunctions between 
derivators) as $1$-morphisms, and modifications as $2$-morphisms. Cisinski \cite{idccm} constructed a pseudo-functor 
$$\mathbb{D}(-) \colon \MOD \to \BigDer_{ad}$$
which is defined on objects by $\mathcal{M} \mapsto \der(\mathcal{M})$ (cf.~Section \ref{examples}) and sends Quillen equivalences to equivalences of 
derivators. We note that $\der(\mathcal{M})$ takes values in locally small categories. There is an induced pseudo-functor of $2$-categories
$$\underline{\mathbb{D}}(-) \colon \underline{\MOD}[\mathcal{Q}^{-1}] \to \BigDer_{ad}.$$
Renaudin \cite{pcthqd} showed that this functor is a \emph{local equivalence}, i.e.~it induces equivalences between the 
morphism categories \cite[Th\'{e}or\`{e}me 3.3.2]{pcthqd}. This could be interpreted as identifying a part of $\BigDer_{ad}$ 
with a truncation of the homotopy theory of homotopy theories as modelled by $\MOD$. For our purposes, it will be necessary 
to reformulate this result in terms of the larger $2$-category $\BigDer_{!}$ (cf.~\cite[Remarque 3.3.3]{pcthqd}).

\begin{thm} \label{coherent-renaudin-2}
The canonical pseudo-functor 
$$\underline{\der}(-) \colon \underline{\MOD}^p \To \BigDer_!$$
is a local equivalence. 
\end{thm}
\begin{proof}
Since the composition $\underline{\MOD}^p \to \underline{\MOD}[\mathcal{Q}^{-1}] \to \BigDer_{ad}$ is a local 
equivalence, it suffices to show that for all $\mathcal{M}$ and $\mathcal{N}$ in $\MOD^p$, the fully-faithful inclusion functor 
\begin{equation}\label{adjoint_functor_theorem}
	\BigDer_{ad}(\der(\mathcal{M}), \der(\mathcal{N})) \hookrightarrow \BigDer_!(\der(\mathcal{M}), \der(\mathcal{N}))
\end{equation}
is also essentially surjective. Let $F \colon \der(\mathcal{M}) \to \der(\mathcal{N})$ be a cocontinuous morphism. 
Suppose that $\mathcal{M} = L_S \SSet^C$. The Quillen adjunction $\mathrm{Id}\colon \SSet^C \rightleftarrows \mathcal{M} \colon \mathrm{Id}$
induces a morphism in $\BigDer_{ad}$, denoted as follows
$$\mathbb{L}_S(\mathrm{Id}) \colon \der(\SSet^C) \rightleftarrows \der(\mathcal{M})\colon \mathbb{R}_S(\mathrm{Id}).$$
The composite $F' = F \circ \mathbb{L}_S(\mathrm{Id})\colon \der(\SSet^C) \to \der(\mathcal{N})$ is a cocontinuous morphism.
By \cite[Remarque 3.3.3]{pcthqd} (or more directly, by using the universal property of $\SSet^C$ due to Dugger, 
see \cite[Proposition 2.2.7]{pcthqd}, and that of $\der(\SSet^C)$ due to Cisinski, see \cite[Th\'eor\`eme 3.3.1]{pcthqd}), 
there is a Quillen adjunction
$$\widetilde{F}' \colon \SSet^C \rightleftarrows \mathcal{N} \colon \widetilde{G}'$$
such that $\der(\widetilde{F}')$ is isomorphic to $F'$ in $\BigDer_!(\der(\SSet^C), \der(\mathcal{N}))$. Then the universal
property of Bousfield localization shows that $(\widetilde{F}', \widetilde{G}')$ descends to a Quillen adjunction
$$\widetilde{F}'' \colon L_S \SSet^C \rightleftarrows \mathcal{N} \colon \widetilde{G}''$$
such that $\der(\widetilde{F}'') \circ \mathbb{L}_S(\mathrm{Id}) $ is isomorphic to $F \circ \mathbb{L}_S (\mathrm{Id})$. Then 
$$\der(\widetilde{F}'') \colon \der(\mathcal{M}) \rightleftarrows \der(\mathcal{N}) \colon \der(\widetilde{G}'')$$ 
is an adjunction of derivators and the left adjoint $\der(\widetilde{F}'')$ is isomorphic to $F$ since the functor 
$$\mathbb{L}_S(\mathrm{Id})^* \colon \BigDer_!(\der(\mathcal{M}), \der(\mathcal{N})) \to \BigDer_!(\der(\SSet^C), \der(\mathcal{N}))$$
is fully faithful, see \cite[4.2-4.4]{hktui}. 
\end{proof}

We would like to emphasize that the equivalence of categories \eqref{adjoint_functor_theorem} in the last proof can be regarded as an 
adjoint functor theorem for derivators that arise from combinatorial model categories.

We recall from Groth \cite{monoidalder} the construction of internal hom-objects in the $2$-category of derivators. 
Given prederivators $\der, \der'\colon \cat \to \CAT$
there is a prederivator $\HOM(\der, \der'): \cat^{\op} \to \CAT$ which is defined explicitly by 
$$\HOM(\der, \der')(X) = \BigDer(\der, \der'_X).$$
Moreover, if $\der'$ is a derivator, then so is $\HOM(\der, \der')$, see \cite[Proposition 1.20]{monoidalder}. The simplicial enrichments of the previous sections are 
obtained from this by setting $X = [n]$ and restricting to the objects. If $\der$ and $\der'$ are derivators we also consider the following closely related prederivator
\begin{align*}
\HOM_!(\der, \der')\colon \cat^{\op} & \To \CAT,\\
X&\; \mapsto \;\BigDer_!(\der, \der'_X).
\end{align*}
To see that this is again a prederivator, it suffices to consider $u\colon X \to Y$ in $\cat$ and a cocontinuous morphism 
$\phi \colon \der \rightarrow \der'_{Y}$, and then note that the morphism 
$$\HOM(\der, \der')(u) (\phi) \colon = u^* \phi \colon \der \rightarrow \der'_{Y} \rightarrow \der'_{X}$$
is again cocontinuous because $u^*\colon \der'_Y \to \der'_X$ is cocontinuous (in fact, it admits a right adjoint 
$u_*\colon \der'_X \to \der'_Y$). 

Similarly, it is easy to check that $\HOM_!(\der, \der')$ is in fact a right derivator. For every 
$u\colon X \to Y$, the pullback functor defined above
$$u^*\colon \BigDer_!(\der, \der'_Y) \To \BigDer_!(\der, \der'_X)$$
admits a left adjoint
$$u_! \colon \BigDer_!(\der, \der'_X) \To \BigDer_!(\der, \der'_Y)$$
which is defined as for the derivator $\HOM(\der, \der')$: given a cocontinuous morphism $\phi \colon \der \rightarrow \der'_X$, then
$$u_!(\phi) \colon = u_!\phi \colon \der \rightarrow \der'_X \rightarrow \der'_Y.$$
We refer the reader to \cite[Propositions 2.5 and 2.9, Example 2.10]{derivators(groth)} for more details about 
adjunctions.

The purpose of this appendix is to show that the functor $\mathbb{D}(-)$ also preserves hom-objects in the sense of the
following theorem. For $\mathcal{M}$ and $\mathcal{N}$ in $\MOD$, let $\MOD_l(\mathcal{M}, \mathcal{N})$ denote the category of left Quillen functors $\mathcal{M} \to \mathcal{N}$
and natural transformations. This is again a category with weak equivalences, the Quillen homotopies, and the forgetful functor 
$\MOD(\mathcal{M}, \mathcal{N}) \to \MOD_l(\mathcal{M}, \mathcal{N})$ is an equivalence. 

\begin{thm} \label{coherent-renaudin}
Let $\mathcal{M}$ and $\mathcal{N}$ be presentable model categories. Then there is an equivalence of prederivators
$$\Phi(\mathcal{M}, \mathcal{N})\colon \der(\MOD_l(\mathcal{M}, \mathcal{N})) \simeq \HOM_!(\der(\mathcal{M}), \der(\mathcal{N})).$$
\end{thm}
\begin{proof}
For every small category $X$, there is a natural equivalence of categories
\[
 \der(\MOD_l(\mathcal{M}, \mathcal{N}))(X) \simeq \ho(\MOD_l(\mathcal{M}, (\mathcal{N}^X)_{\mathrm{inj}}))
\]
since a $X$-diagram of left Quillen functors $\mathcal{M} \to \mathcal{N}$ is the same as a left Quillen functor
$\mathcal{M} \to (\mathcal{N}^X)_{\mathrm{inj}}$ where the target is given the injective model structure. The latter 
model category is strictly speaking no longer presentable, but we can find a natural replacement for it by a presentable 
one $\mathcal{N}^X$ simply by a change to the projective (co)fibrations. Then by \cite[Proposition 2.2.9]{pcthqd}, we 
have an equivalence of categories 
$$\ho(\MOD_l(\mathcal{M}, (\mathcal{N}^X)_{\mathrm{inj}})) \stackrel{\simeq}{\longleftarrow} \ho(\MOD_l(\mathcal{M}, \mathcal{N}^X)).$$
There is a morphism of prederivators, induced by $\der(-)$, 
$$\Phi(\mathcal{M}, \mathcal{N})\colon \ho(\MOD_l(\mathcal{M}, (\mathcal{N}^{?})_{\mathrm{inj}}) \To \HOM_!(\der(\mathcal{M}), \der(\mathcal{N}))$$
whose components are equivalences of categories because we have commutative diagrams as follows
\[
\xymatrix@C=0pt{
\ho(\MOD_l(\mathcal{M}, (\mathcal{N}^X)_{\mathrm{inj}})) \ar[rr]^{\Phi(\mathcal{M}, \mathcal{N})_X} && \BigDer_!(\der(\mathcal{M}), \der(\mathcal{N}^X)) \\
& \ho(\MOD_l(\mathcal{M}, \mathcal{N}^X)) \ar[ur]_{\simeq} \ar[ul]^{\simeq} & \\
}
\]
where the indicated equivalence on the right is a consequence of Theorem \ref{coherent-renaudin-2}.
\end{proof}




\section{A remark on the approximation theorem}

The original approximation theorem of Waldhausen \cite{akts} states sufficient conditions for an exact functor of Waldhausen
categories to induce an equivalence in $K$-theory. Although Waldhausen did not analyse the meaning of these conditions from the
viewpoint of homotopical algebra, various authors have later studied connections between abstract homotopy theory and
Waldhausen $K$-theory and have shown more general and refined versions of the approximation theorem (see 
\cite{tt,aKt-model-cat,ktde,ikted,aKtaht}). These results ultimately say that Waldhausen $K$-theory is an invariant of 
homotopy theories and allow definitions of the theory via $\infty$-categories or simplicial categories (see also \cite{ktsc}).

\begin{thm}[Cisinski \cite{ikted}, Blumberg--Mandell \cite{aKtaht}]
Let $F\colon \mathcal{C} \to \mathcal{C}'$ be an exact functor of strongly saturated derivable Waldhausen categories. If 
the induced functor $\ho{F} \colon \ho{\mathcal{C}} \to \ho{\mathcal{C}'}$ is an equivalence of categories, then the map 
$ w\S_n F \colon  w\S_n \mathcal{C} \to  w\S_n \mathcal{C}'$ is a weak equivalence for all $n \geq 0$. In particular,
the map $K(F)\colon K(\mathcal{C}) \to K(\mathcal{C}')$ is also a weak equivalence.
\end{thm}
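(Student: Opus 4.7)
The approach would be to combine Theorem~\ref{cisinski}(c) with the bisimplicial identification used in the proof of Theorem~\ref{agreement} and a Rezk/classification-space argument. First, the ``In particular'' clause is immediate from the levelwise statement, since $K(F)$ is (after delooping) the realization of the bisimplicial map $N_\bullet w\S_\bullet F$ and a levelwise weak equivalence of bisimplicial sets realizes to a weak equivalence. So one need only prove that $w\S_n F$ is a weak equivalence for each $n$.

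The first step would be to pass from Waldhausen categories to derivators. By Theorem~\ref{cisinski}(c), the hypothesis that $\ho{F}$ is an equivalence implies that $\der(F)$ is an equivalence of pointed right derivators in $\sDer$; in particular each functor $\der(F)(X)\colon \der(\mathcal{C})(X) \to \der(\mathcal{C}')(X)$ is an equivalence of categories. Next, note that $\S_n \mathcal{C}$ inherits the structure of a strongly saturated derivable Waldhausen category (this is implicit in the proof of Theorem~\ref{agreement}), so Theorem~\ref{agreement} applies at each level. The bisimplicial comparison used there produces a natural zigzag of weak equivalences
$$
N_\bullet w\S_n \mathcal{C} \stackrel{\sim}{\longleftarrow} N_\bullet w\S^h_n \mathcal{C} \stackrel{\cong}{\longrightarrow} \S_{n,\bullet} \der(\mathcal{C}),
$$
where the left arrow is the $\S^h$-vs-$\S$ agreement of \cite{ikted,aKtaht} (which requires strong saturation) and the right arrow is the identification from the proof of Theorem~\ref{agreement}. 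The problem then reduces to showing that the map $\S_{n,\bullet} \der(F)$ is a weak equivalence of simplicial sets.

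The final and crucial step is to identify $\S_{n,\bullet} \der(\mathcal{C})$, up to natural weak equivalence, with a classifying space of isomorphism groupoids on which equivalences of derivators manifestly act by weak equivalences. Concretely, an $m$-simplex of $\S_{n,\bullet} \der$ is an object of $\der([m] \times \Ar[n])_{\eq}$ satisfying the $\S_n$-condition, and the $_{\eq}$-subscript forces the underlying $[m]$-diagram to be a chain of isomorphisms. Using (Der5), valid here by Theorem~\ref{cisinski}(a), every chain of isomorphisms in $\S_n \der(\mathcal{C})(e)$ can be lifted coherently to an object of $\der([m] \times \Ar[n])_{\eq}$, and this identifies $\S_{n,\bullet} \der(\mathcal{C})$ up to natural weak equivalence with $N_\bullet \iso \S_n \der(\mathcal{C})(e)$. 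Since $\der(F)$ is an equivalence of derivators, it restricts to an equivalence of the iso-groupoids $\iso \S_n \der(\mathcal{C})(e) \to \iso \S_n \der(\mathcal{C}')(e)$, which induces a weak equivalence on nerves. Combining with the previous step completes the proof.

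The main obstacle is the classification-space identification in the final step: one must show that $[m] \mapsto \ob \der([m] \times \Ar[n])_{\eq}$, as a simplicial set, captures exactly the homotopy type of the nerve of the iso-groupoid of $\S_n \der(e)$. This is not automatic for a general prederivator — indeed, the failure of such an identification for arbitrary prederivators is reflected in Remark~\ref{strong-eq-vs-eq}, where $\WK$ is seen not to be invariant under equivalences of derivators — and its validity for $\der(\mathcal{C})$ rests essentially on the combination of (Der5) and strong saturation of $\mathcal{C}$.
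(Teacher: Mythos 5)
There is a genuine gap, and it is located exactly where you flag the ``main obstacle'': the classification-space identification in your final step is false, and no amount of (Der5) plus strong saturation can rescue it. If $\S_{n,\bullet}\der(\mathcal{C})$ were naturally weakly equivalent to $N_\bullet \iso \S_n\der(\mathcal{C})$ for every strongly saturated derivable Waldhausen category $\mathcal{C}$, then combining this with Theorem~\ref{agreement} would show that the comparison map $\mu\colon \WK(\der(\mathcal{C}))\to\DK(\der(\mathcal{C}))$ is a weak equivalence for all such $\mathcal{C}$ --- i.e.\ it would prove Maltsiniotis's comparison conjecture in the Waldhausen setting, which is disproved in \cite{ankttd} (as recalled in Section 5.2, $\mu$ fails to be an equivalence already for derivators arising from differential graded algebras). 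The error is that (Der5) only gives fullness and essential surjectivity of $\dia$, hence degreewise surjectivity of the map $\S_{n,m}\der\to N_m\iso\S_n\der$; it says nothing about the homotopy type of the fibers. Coherent lifts of a chain of isomorphisms are neither unique nor parametrized by anything contractible. Concretely, $|N_\bullet w\S_n\mathcal{C}|$ decomposes via Dwyer--Kan theory into classifying spaces of derived automorphism \emph{spaces}, whereas $|N_\bullet\iso\S_n\der(\mathcal{C})|$ only sees the automorphism \emph{groups} in the homotopy category; the map between them is a $\pi_0$-isomorphism and $\pi_1$-epimorphism but kills all higher homotopy of the mapping spaces.

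More structurally, the reduction in your second step already dooms the strategy: once you have only that $\der(F)$ is an \emph{equivalence} of derivators, you cannot conclude anything about $\S_{n,\bullet}\der(F)$, because $\WK$ (equivalently the functor $\der\mapsto\S_{n,\bullet}\der$) is provably \emph{not} invariant under equivalences of derivators --- this is Remark~\ref{strong-eq-vs-eq}. The invariance established in the paper is only under \emph{strong} equivalences, and whether a derived equivalence of Waldhausen categories induces a strong equivalence of the associated derivators is explicitly listed as an open question in Section 6.2 (where the authors note that a positive answer \emph{would} yield the theorem by essentially your route). The theorem as stated is an external result of Cisinski \cite{ikted} and Blumberg--Mandell \cite{aKtaht}, quoted without proof; their arguments work directly with the Waldhausen categories and their hammock localizations (approximation-type and mapping-space arguments), not by passing through the associated derivators, precisely because the derivator forgets the homotopical information your final step needs.
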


The purpose of this appendix is to note the following result which may be regarded as a partial converse to the approximation 
theorem. The proof is based on ideas of Dwyer-Kan for modelling mapping spaces in homotopical algebra via zigzag diagrams 
(see, e.g., \cite{fcha}) and related results from \cite{aKtaht}.

\begin{thm}
Let $F\colon \mathcal{C} \to \mathcal{C}'$ be an exact functor of derivable Waldhausen categories. Suppose that:
\begin{itemize} \item[(i)] $wF\colon w\mathcal{C} \to w \mathcal{C}'$ induces isomorphisms on $\pi_0$ and
$\pi_1$ for all basepoints,
\item[(ii)] $w \S_2 F \colon w \S_2 \mathcal{C} \to w \S_2 \mathcal{C}'$ is $1$-connected (i.e.~it induces an isomorphism
on $\pi_0$ and an epimorphism on $\pi_1$ for all basepoints).
\end{itemize}
Then $\ho{F} \colon \ho{\mathcal{C}} \to \ho{\mathcal{C}'}$ is an equivalence of categories.
\end{thm}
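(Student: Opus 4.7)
The plan is to show that $\ho F$ is both essentially surjective and fully faithful. Essential surjectivity is immediate: $\pi_0 Nw\mathcal C$ is in natural bijection with the set of isomorphism classes of $\ho\mathcal C$, and by hypothesis (i) the map $\pi_0(wF)$ is a bijection, so $\ho F$ is bijective on isomorphism classes of objects.

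For full faithfulness, I would model $\ho\mathcal C(X,Y)$ as $\pi_0$ of a Dwyer--Kan style mapping space realised as a homotopy fibre of a map involving $Nw\S_2\mathcal C$. The key input, available in the derivable Waldhausen setting by the results of Cisinski \cite{ikted} and Blumberg--Mandell \cite{aKtaht}, is that the forgetful functor $\S_2\mathcal C\to \mathcal C\times\mathcal C$ sending $(X'\rightarrowtail Y'\twoheadrightarrow Z')\mapsto (X',Y')$ fits into a natural quasi-fibration
$$\operatorname{Map}^{h}_{\mathcal C}(X,Y) \To Nw\S_2\mathcal C \To (Nw\mathcal C)^{2},$$
whose fibre over $(X,Y)$ is a model for the mapping space in the Dwyer--Kan simplicial localisation with $\pi_0 \operatorname{Map}^{h}_{\mathcal C}(X,Y)\cong \ho\mathcal C(X,Y)$. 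Applying $F$ and comparing the long exact sequences of homotopy groups then produces a commutative ladder with exact rows
$$\xymatrix@C=5pt@R=18pt{
\pi_1 Nw\S_2\mathcal C\ar[r]\ar[d]_-{\alpha_1} & (\pi_1 Nw\mathcal C)^{2}\ar[r]\ar[d]_-{\alpha_2} & \ho\mathcal C(X,Y)\ar[r]\ar[d]_-{\ho F} & \pi_0 Nw\S_2\mathcal C\ar[r]\ar[d]_-{\alpha_4} & (\pi_0 Nw\mathcal C)^{2}\ar[d]_-{\alpha_5}\\
\pi_1 Nw\S_2\mathcal C'\ar[r] & (\pi_1 Nw\mathcal C')^{2}\ar[r] & \ho\mathcal C'(FX,FY)\ar[r] & \pi_0 Nw\S_2\mathcal C'\ar[r] & (\pi_0 Nw\mathcal C')^{2},
}$$
in which $\alpha_2$ and $\alpha_5$ are isomorphisms by (i), $\alpha_4$ is an isomorphism by (ii), and $\alpha_1$ is surjective by (ii). A diagram chase in the non-abelian five-lemma style, exploiting the action of $\pi_1$ of the base on $\pi_0$ of the fibre, would then force $\ho F$ to be bijective on $\operatorname{Hom}$-sets.

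The hard part will be the first step: rigorously establishing the Dwyer--Kan quasi-fibration in terms of $\S_2\mathcal C$. This identification rests on derivability in an essential way --- every morphism factors as a cofibration followed by a weak equivalence and the cofibre of a cofibration is determined up to weak equivalence, so that $Nw\S_2\mathcal C$ is weakly equivalent to $Nw\mathcal C^{[1]}$ compatibly with the source-target projection to $(Nw\mathcal C)^{2}$; the Dwyer--Kan calculus then identifies the homotopy fibre with the correct mapping space. Modulo this input, the rest of the argument is a routine diagram chase.
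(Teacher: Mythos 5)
Your proposal is correct and follows essentially the same route as the paper: the paper also compares the homotopy fibres of $(\partial_1,\partial_2)\colon w\S_2\mathcal{C}\to w\mathcal{C}\times w\mathcal{C}$ with those for $\mathcal{C}'$, identifies them (via Blumberg--Mandell \cite{aKtaht}) with the hammock-localization mapping spaces $L^H(\mathcal{C})(X_1,X_2)$, and concludes by the long exact sequence together with the non-abelian five lemma for $\pi_0$ of homotopy fibres (citing \cite[Lemma 1.4.7]{mcat} for the diagram chase you sketch). The only cosmetic difference is that the paper works directly with homotopy fibres rather than asserting a quasi-fibration, which sidesteps the need to verify that property.
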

\begin{proof}
Consider the commutative square
$$\xymatrix{
w\S_2 \mathcal{C} \ar[rr]^{w \S_2 F} \ar[d]^{(\partial_1,\partial_2)} && w \S_2 \mathcal{C}' \ar[d]^{(\partial_1, \partial_2)} \\
w \mathcal{C} \times w \mathcal{C} \ar[rr]^{wF \times wF} && w \mathcal{C}' \times w \mathcal{C}'}$$ 
Using the properties of the long exact sequence of homotopy groups and  assumptions (i) and (ii), it follows that the induced 
map between the homotopy fibers of the vertical maps (at any basepoint) induces an isomorphism on $\pi_0$ (see, e.g., \cite[Lemma 1.4.7]{mcat}). 
Applying \cite[Theorem 1.2]{aKtaht}, the map between the homotopy fibers at the points defined by $(X_1, X_2) \in\ob \mathcal{C} \times \ob \mathcal{C}$ and $(F(X_1), F(X_2))\in \ob \mathcal{C}' \times \ob \mathcal{C}'$, 
respectively, can be identified with the map induced by $F$ between the corresponding mapping spaces in the respective hammock localizations
$$L^H(\mathcal{C})(X_1, X_2) \to L^H(\mathcal{C}')(F(X_1), F(X_2)).$$
Thus applying $\pi_0$ to this map gives an isomorphism 
$$\ho{F}\colon \ho{\mathcal{C}}(X_1, X_2) \cong \ho{\mathcal{C}'}(F(X_1), F(X_2))$$
and therefore $\ho{F}$ is fully faithful. It is also essentially surjective because $wF$ is an epimorphism on $\pi_0$.
\end{proof}

To sum up, we have the following

\begin{cor}
Let $F\colon \mathcal{C} \to \mathcal{C}'$ be an exact functor of strongly saturated derivable Waldhausen categories. If
\begin{itemize} \item[(i)] $wF\colon w\mathcal{C} \to w \mathcal{C}'$ induces isomorphisms on $\pi_0$ and $\pi_1$, and
\item[(ii)] $w \S_2 F \colon w \S_2 \mathcal{C} \to w \S_2 \mathcal{C}'$ is $1$-connected, 
\end{itemize}
then $w\S_n F \colon w\S_n \mathcal{C} \to w \S_n \mathcal{C}'$ is a weak equivalence for all $n \geq 0$. In particular,
the induced map $K(F)\colon K(\mathcal{C}) \to K(\mathcal{C}')$ is also a weak equivalence.
\end{cor}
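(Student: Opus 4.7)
The plan is simply to combine the two theorems immediately preceding the corollary. Hypotheses (i) and (ii) are exactly the hypotheses of the previous theorem, which establishes (even without strong saturation) that $\ho{F}\colon \ho{\mathcal{C}} \to \ho{\mathcal{C}'}$ is an equivalence of categories. Once we have this, we are in position to apply the Cisinski--Blumberg--Mandell approximation theorem (the first theorem of this appendix), which requires precisely strong saturation of the derivable Waldhausen categories together with the fact that $\ho{F}$ is an equivalence; its conclusion is exactly that $w\S_n F$ is a weak equivalence for every $n\geq 0$.

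Thus the argument proceeds in two lines: first invoke the partial converse theorem on $F$ using (i) and (ii) to obtain that $\ho{F}$ is an equivalence; then invoke the approximation theorem using strong saturation and this equivalence to conclude the statement on $w\S_n F$. The final assertion about $K(F)$ follows because $K(\mathcal{C}) = \Omega\lvert N_\bullet w\S_\bullet \mathcal{C}\rvert$ and $K(\mathcal{C}')=\Omega\lvert N_\bullet w\S_\bullet \mathcal{C}'\rvert$, so a levelwise weak equivalence of simplicial spaces $N_\bullet w\S_n F$ yields a weak equivalence on geometric realizations and hence on loop spaces.

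There is no real obstacle here: the corollary is stated as a formal consequence, so the only thing to verify is that the strong saturation hypothesis is used only in the second step (the approximation theorem), not in the first, which is indeed how the previous theorem is stated. No intermediate construction is needed, and no separate verification of the hypotheses of either theorem is required beyond what is already assumed.
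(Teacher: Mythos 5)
Your proposal is correct and matches the paper's intent exactly: the corollary is stated as a formal combination of the two preceding theorems (the paper gives no separate proof, introducing it with ``To sum up''), and you correctly note that strong saturation enters only in the approximation-theorem step while the partial converse needs only derivability.
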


These results show that being a derived equivalence is much stronger than being a $K$-equivalence. More specifically, the property
of being a derived equivalence does not take into account the ``group-completion'' process that takes place in the definition of 
$K$-theory. To obtain an ideal approximation theorem, that encodes this group completion process, one would need to `localize 
$\mathcal{C}$ and $\mathcal{C}'$' at all the relations which are derived from the additivity property, and then ask for the weaker 
property that these localized objects are equivalent. This localization is accomplished using $\infty$-categories with the construction 
of the universal additive invariant in \cite{BGT} and it is essentially shown that it is equivalent to Waldhausen $K$-theory. 

\providecommand{\bysame}{\leavevmode\hbox to3em{\hrulefill}\thinspace}
\providecommand{\MR}{\relax\ifhmode\unskip\space\fi MR }
\providecommand{\MRhref}[2]{%
  \href{http://www.ams.org/mathscinet-getitem?mr=#1}{#2}
}
\providecommand{\href}[2]{#2}


\end{document}